\newtheorem{theorem}{Theorem}[section]
\newtheorem{proposition}[theorem]{Proposition}
\newtheorem{lemma}[theorem]{Lemma}
\numberwithin{equation}{section}
\newcommand{\R}{\mathbb R}
\newcommand{\eps}{\varepsilon}
\newcommand{\e}{\varepsilon}
\newcommand{\n}{\nabla}
\newcommand{\la}{\langle}
\newcommand{\ra}{\rangle}
\newcommand{\vf}{\varphi}
\newcommand{\neps}{(\varepsilon+|\nabla u_\varepsilon|^2)}
\newcommand{\io}{\int_\Omega}
\newcommand{\uei}{u_{\varepsilon,i}}
\newcommand{\ue}{u_\varepsilon}
\title[Regularity]{A regularity result for the p-laplacian near uniform ellipticity}
\author{Carlo Mercuri}
\address{Swansea University\\ Department of Mathematics\\ Singleton Park\\
Swansea\\ SA2~8PP\\ Wales, United Kingdom}
\email{C.Mercuri@swansea.ac.uk}
\author{Giuseppe Riey}
\address{Universit\`a della Calabria\\
Dipartimento di Matematica e Informatica\\
Pietro Bucci 31B, I-87036 Arcavacata di Rende, Cosenza, Italy}
\email{Riey@mat.unical.it}
\author{Berardino Sciunzi}
\address{Universit\`a della Calabria\\
Dipartimento di Matematica e Informatica\\
Pietro Bucci 31B, I-87036 Arcavacata di Rende, Cosenza, Italy}
\email {Sciunzi@mat.unical.it}
\begin{document}
\begin{abstract}
 We consider weak solutions to a class of Dirichlet boundary value problems involving the $p$-Laplace operator, and prove that the second weak derivatives are in $L^{q}$ with $q$ as large as it is desirable, provided $p$ is sufficiently close to $p_0=2$. We show that this phenomenon is driven by the classical Calder\'on-Zygmund constant. As a byproduct of our analysis we show that $C^{1,\alpha}$ regularity improves up to $C^{1,1^-}$, when p is close enough to 2. This result we believe it is particularly interesting  in higher dimensions $n>2,$
when optimal $C^{1,\alpha}$ regularity is related to the optimal regularity of $p$-harmonic mappings, which is still open (see e.g. \cite{Te}).
 \end{abstract}

\thanks{\it 2010 Mathematics Subject
 Classification: 35J92,35B33,35B06}

\maketitle

\tableofcontents

\section{Introduction and results}\label{introdue}

In this paper we deal with the $W^{2,q}$ regularity of the weak solutions to
\begin{equation}\label{EQ}
\begin{cases}
-\Delta_p\,u=f, & \text{ in }\Omega\\
u=0,&  \text{ on }\partial\Omega\,,
\end{cases}
\end{equation}
where $p>1$,  $\Delta_p u:={\rm div}(|\nabla u|^{p-2}\nabla u)$ is the $p$-Laplace operator, $\Omega$ is a bounded smooth domain of $\mathbb{R}^n,n\geq2,$ and

$$
f\in \left\{
\begin{array}{ll}
W^{1,r}(\Omega),\,\,\,r\in (n, \infty],  & \quad \hbox{ if } p > 2 , \\

C(\overline{\Omega}),& \hbox{ if } p\leq 2.
\end{array}
\right.
$$

Namely, we consider possibly sign-changing functions $u\in W^{1,p}_{0}(\Omega)$ such that
\begin{equation}\label{regreggete0}
\int_\Omega |\nabla u|^{p-2}\nabla u\,\nabla \psi\,dx\,=\,\int_\Omega f\psi \,dx\,,
\end{equation}
for all $\psi\in W^{1,p}_{0}(\Omega)$. \\
It is well-known that under our assumptions $u\in C^{1,\alpha}(\overline{\Omega}),$ for some $\alpha<1,$ as it follows by the classical results \cite{Di,Li,T}. Furthermore, results
 on the optimal H\"{o}lder exponent $\alpha$ are also known in the literature, see e.g. the recent paper \cite{Te} that is also based on previous results obtained in \cite{kusi}.\\
It is worth mentioning that, since by classical Morrey's embedding theorem our function $f$ is (up to the boundary H\"older) continuous, the notion of weak and viscosity solutions (see e.g. \cite{CIL}) coincide. This follows by the result of \cite{lind}; the recent paper \cite{julin} contains a new interesting proof of this known fact.\\
\noindent Here we address the study of the summability of the second derivatives of the solutions on the whole $\Omega$. From \cite{DS1}, \cite{Sci1,Sci2}) it is known under the above assumptions that $u\in W^{2,2}_{\textrm{loc}}(\Omega)$ if $1<p<3$,  and that if $p\geq 3$ and the source term $f$ is strictly positive then $u\in W^{2,q}_{\textrm{loc}}(\Omega)$ for $q<\frac{p-1}{p-2}$. We observe that, in the case $p\geq 3,$ it is possible to construct examples which show that such a regularity is optimal, see e.g. \cite{Sci2}.
The above-mentioned regularity results are obtained exploiting improved weighted estimates on the summability of the second derivatives, see e.g. \cite{T} . Note that the Calder\'on-Zygmund theory cannot be extended trivially to the context of quasilinear elliptic problems. The interested reader is referred to \cite{surveymin,ZIGMUND} and the references therein. \\
\noindent As a preliminary observation we first would like to point out that the aforementioned weighted estimates on the second derivatives holds up to the boundary in those cases where one can handle problems caused by the intersection of the critical set $Z_u=\{x\in \overline{\Omega}\,\,: \,\,\nabla u =\bf{0}\}$ with $\partial \Omega.$
We have the following
\begin{proposition}\label{Hesspart}
Let  $p>1$ and let $\Omega$ be a bounded smooth domain of $\mathbb{R}^n,$ and $u$ be weak solution solution of (\ref{EQ}) with $\nabla u\neq \bf{0}$ on $\partial \Omega$. Assume that $f\in W^{1,r}(\Omega)$ for some $r\in (n, \infty]$ and for any $p>1$.
Then, for any $\beta<1$ there exists a constant $C_\beta=C_\beta(n,p,f)>0$
such that
\begin{equation}\label{kdjlgadklfgbaò}
\int_{\Omega} |\nabla u|^{p-2-\beta}  \|D^2 u\|^2 <C_\beta.
\end{equation}

\end{proposition}
Here $D^2 u$ denotes the Hessian matrix of $u$, and $\|D^2 u\|$ is any equivalent norm of it. This implies in particular the global regularity $u\in W^{2,2}(\Omega)$ if $1<p<3$. Note that no sign assumption on $u$ nor on the source term $f$ is required.
\begin{proof}
By a classical result of G. Stampacchia it is well-known that the second derivatives of $u$ vanish almost everywhere on the critical set $Z_u.$ From Corollary 2.1 \cite{DS1} (see also \cite{Sci1,Sci2}) we have \begin{equation}\label{kkdjlgadklfgbaò}
\int_{\omega} |\nabla u|^{p-2-\beta}  \|D^2 u\|^2 <\infty
\end{equation}
on every open set $\omega \subset \Omega$ which is strictly contained in $\Omega$. This estimate holds in fact up to the boundary. This can be proved word by word as in Corollary 2.1 in \cite{DS1} simply replacing in the proof Hopf's boundary point lemma by our boundary assumption on the gradient. And this concludes the proof.
\end{proof}
Note that, as we will discuss later on, the general assumption $\nabla u\neq \bf{0}$ on $\partial \Omega$ is fulfilled e.g. in all those cases when the Hopf boundary lemma applies (see \cite{V}).
We are now able to highlight a regularising effect that occurs when the quasilinear equation approaches the semilinear one, namely when $p$ approaches $p_0=2$. This phenomenon is driven by the classical elliptic regularity theory, namely by the constant $C(n,q)$ in the Calder\'on-Zygmund estimate
\begin{equation}\label{calderon0}
\|D^2 w\|_{L^q(\Omega)}\leq C(n,q)\|\Delta w\|_{L^q(\Omega)},\,
\end{equation}
see e.g. Corollary 9.10 in \cite{GiTr}. We point out that it is because of this result that we restrict ourselves to solutions which vanish on $\partial \Omega$. \\
To describe this phenomenon let us start recalling that, formally we have
\begin{equation}\nonumber
\Delta _p u\,=\, |\nabla u|^{p-2}\Delta u+(p-2)|\nabla u|^{p-4}\Delta_\infty u
\end{equation}
where $\Delta_\infty u = \left(D^2 u \nabla u,\nabla u\right)$. Still formally, using the above decomposition, we can rewrite our equation as
\begin{equation}\nonumber
-\Delta u\,=\, (p-2)\frac{\Delta_\infty u}{|\nabla u|^{2}}
\,+\,\frac{f}{|\nabla u|^{p-2}}.
\end{equation}
Then we show that the term $(p-2)|\nabla u|^{-2}\Delta_\infty u$ is negligible when $p$ is close to $p_0=2$ and apply the standard Calder\'on-Zygmund theory. Note that, to do this, we also need information on the summability  of the term $\frac{f}{|\nabla u|^{p-2}}$ that we deduce, following \cite{DS1}, as a consequence of the aforementioned weighted estimate \eqref{kdjlgadklfgbaò}. Such information is provided by the following
\begin{proposition}\label{RHSregpart}
Let  $\Omega$ be a bounded smooth domain of $\mathbb{R}^n,$ and $u$ be weak solution to \eqref{EQ} with $\nabla u\neq \bf{0}$ on $\partial \Omega$. Then, for $p>2$ and for any fixed $1\leq q<\frac{p-1}{p-2},$ there exists a constant $C=C(n,p,q,f)>0$  such that
\begin{equation}\label{drdrdgigregpart}
\int_{\Omega}\frac{f^2}{\Big[|\nabla u|^{p-2}\Big]^q}\leq C.
\end{equation}
The same bound holds for any $1\leq q<+\infty$ when $1<p\leq 2.$
\end{proposition}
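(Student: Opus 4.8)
The plan is to split according to the value of $p$. When $1<p\le 2$ the weight $\big[|\nabla u|^{p-2}\big]^{-q}=|\nabla u|^{(2-p)q}$ is a nonnegative power of $|\nabla u|$, so, since $u\in C^{1,\alpha}(\overline\Omega)$ and $f\in C(\overline\Omega)$ by Morrey's embedding, the integrand is bounded on $\Omega$ and \eqref{drdrdgigregpart} follows at once with $C=\|f\|_{L^\infty}^2\,\|\nabla u\|_{L^\infty}^{(2-p)q}\,|\Omega|$, for every $q<\infty$. All the work is therefore in the case $p>2$, which I describe next.

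Fix $p>2$ and $1\le q<\frac{p-1}{p-2}$, and set $m:=(p-2)q<p-1$. I would first regularise, replacing $|\nabla u|$ by $g_\varepsilon:=(\varepsilon+|\nabla u|^2)^{1/2}$, so that every integral below is a priori finite (indeed $g_\varepsilon\ge\sqrt\varepsilon$ and $f$ is bounded), and recover \eqref{drdrdgigregpart} at the end by monotone convergence as $\varepsilon\downarrow 0$ since $f^2 g_\varepsilon^{-m}$ increases. The key step is to use the equation to trade one factor $f=-\,\mathrm{div}\big(|\nabla u|^{p-2}\nabla u\big)$ for a derivative and integrate by parts:
\begin{equation*}
\int_\Omega f^2 g_\varepsilon^{-m}= -\int_{\partial\Omega}\big(|\nabla u|^{p-2}\partial_\nu u\big)\,f\,g_\varepsilon^{-m}+\int_\Omega |\nabla u|^{p-2}\nabla u\cdot\nabla\big(f g_\varepsilon^{-m}\big).
\end{equation*}
The boundary term is bounded uniformly in $\varepsilon$: since $\nabla u\neq\mathbf 0$ on $\partial\Omega$ one has $g_\varepsilon\ge c_0>0$ there, hence $g_\varepsilon^{-m}\le c_0^{-m}$, and the rest is bounded on the compact surface $\partial\Omega$. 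Expanding $\nabla\big(f g_\varepsilon^{-m}\big)=g_\varepsilon^{-m}\nabla f-m\,f g_\varepsilon^{-m-1}\nabla g_\varepsilon$ splits the interior integral into two pieces. In the first, $|\nabla u|^{p-1}g_\varepsilon^{-m}\le g_\varepsilon^{\,p-1-m}$ is bounded because $p-1-m>0$, and since $\nabla f\in L^1(\Omega)$ this piece is $\le C$ uniformly in $\varepsilon$. In the second, using $\nabla g_\varepsilon=g_\varepsilon^{-1}D^2u\,\nabla u$ together with $\big|(D^2u\,\nabla u)\cdot\nabla u\big|\le\|D^2u\|\,|\nabla u|^2$ and then Cauchy--Schwarz, peeling off the weight $|\nabla u|^{(p-2-\beta)/2}$, one bounds it by
\begin{equation*}
m\Big(\int_\Omega |\nabla u|^{p-2-\beta}\|D^2u\|^2\Big)^{1/2}\Big(\int_\Omega f^2 g_\varepsilon^{-2m-2}|\nabla u|^{\,p+\beta}\Big)^{1/2}\le m\,C_\beta^{1/2}\Big(\int_\Omega f^2 g_\varepsilon^{-m'}\Big)^{1/2},
\end{equation*}
where $C_\beta$ is the constant of Proposition~\ref{Hesspart}, $m':=2m-(p-2)-\beta$, and I used $2(p-1)-(p-2-\beta)=p+\beta$ and $|\nabla u|^{\,p+\beta}\le g_\varepsilon^{\,p+\beta}$.

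Collecting terms gives $\int_\Omega f^2 g_\varepsilon^{-m}\le C+m\,C_\beta^{1/2}\big(\int_\Omega f^2 g_\varepsilon^{-m'}\big)^{1/2}$ with $C$ independent of $\varepsilon$. Now $m'<m$ exactly when $m<(p-2)+\beta$, and since $m=(p-2)q<p-1$ one can fix $\beta<1$ so close to $1$ that $(p-2)q<(p-2)+\beta$; this is precisely where the restriction $q<\frac{p-1}{p-2}$ enters, and it also forces $m'<(p-2)+\beta<p-1$, so that the estimate of the first interior piece remains valid at the next stage. Iterating and observing that each stage lowers the exponent by at least the fixed amount $(p-2)+\beta-m>0$, after finitely many steps — a number depending only on $n,p,q,f$ — the exponent drops to $\le 0$, at which stage the corresponding integral is $\le\|f\|_{L^\infty}^2\big(\|\nabla u\|_{L^\infty}^2+1\big)^{\,|m^{(k)}|/2}|\Omega|$. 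Substituting back up the finite chain of inequalities yields $\int_\Omega f^2 g_\varepsilon^{-m}\le C(n,p,q,f)$, and letting $\varepsilon\downarrow 0$ gives \eqref{drdrdgigregpart}.

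I expect the main obstacle to be the rigorous justification of the integration by parts at each stage — checking that $|\nabla u|^{p-2}\nabla u$ and $f g_\varepsilon^{-m}$ carry enough Sobolev regularity, and controlling the behaviour near the critical set $Z_u$ and near $\partial\Omega$, as in \cite{DS1} — together with the bookkeeping ensuring the iteration terminates with constants depending only on $n,p,q$ and $f$; the analytic content itself is simply the interplay between the equation and the weighted Hessian bound of Proposition~\ref{Hesspart}.
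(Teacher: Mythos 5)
Your overall strategy is the one the paper uses: test the equation against $f$ times the singular weight, control the term containing $\nabla f$ by the restriction $q<\frac{p-1}{p-2}$ (which makes $|\nabla u|^{p-1}$ times the weight bounded), and control the Hessian term through the weighted estimate of Proposition~\ref{Hesspart}. The differences are in the implementation, and one of them opens a genuine gap. You regularise only the weight, writing $g_\eps=(\eps+|\nabla u|^2)^{1/2}$ but keeping the original solution $u$, and then integrate by parts with the test function $f g_\eps^{-m}$. For this you need $\nabla g_\eps=g_\eps^{-1}D^2u\,\nabla u$ to be a well-defined integrable function and the intermediate integrals such as $\int_\Omega g_\eps^{\,p-m-2}\|D^2u\|$ to be a priori finite; since $m=(p-2)q\ge p-2$, the power of $g_\eps$ there is nonpositive, so even for fixed $\eps>0$ this finiteness reduces to $D^2u\in L^1(\Omega)$. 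For $2<p<3$ this follows from Proposition~\ref{Hesspart} (take $\beta=p-2$), but for $p\ge 3$ with sign-changing $f$ the only available information is the degenerately weighted bound $\int_\Omega|\nabla u|^{p-2-\beta}\|D^2u\|^2<\infty$, which does not by itself give $u\in W^{2,1}(\Omega)$; the unweighted $W^{2,q}_{\mathrm{loc}}$ results quoted in the introduction require $f>0$. So the integration by parts you flag as "the main obstacle" is not a routine verification: in the range $p\ge 3$ it is exactly the point where the argument, as written, does not close. The paper avoids this by regularising the \emph{equation} itself, i.e.\ working with the uniformly elliptic problems \eqref{EQreg}, whose solutions $u_\eps$ are $C^{2,\alpha}(\overline\Omega)$, so that every manipulation is classical and all bounds are uniform in $\eps$; the estimate for $u$ is then recovered by Fatou using the $C^{1,\alpha'}$ convergence $u_\eps\to u$. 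A second difference is that the paper uses a compactly supported cutoff $\varphi_\delta$ in place of your boundary term (the extra term $A_\eps$ produced by $\nabla\varphi_\delta$ is controlled since $|\nabla\varphi_\delta|\le C/\delta$ on a set of measure $O(\delta)$), so the hypothesis $\nabla u\ne\mathbf 0$ on $\partial\Omega$ is used only through Proposition~\ref{Hesspart}, not through a trace computation.

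Your treatment of the Hessian term is also more complicated than necessary. After Cauchy--Schwarz you arrive at $\int_\Omega f^2g_\eps^{-m}\le C+mC_\beta^{1/2}\big(\int_\Omega f^2g_\eps^{-m'}\big)^{1/2}$ with $m'=2m-(p-2)-\beta$ and iterate; the bookkeeping is correct ($m^{(k)}-((p-2)+\beta)$ doubles at each step, so the exponents reach $0$ in finitely many steps), but a single application of Young's inequality with a small parameter $\eta$, as in the paper's estimate of the term $C_\eps$, absorbs $\eta\int_\Omega f^2 g_\eps^{-m}$ into the left-hand side and bounds the remainder directly by Proposition~\ref{Hess}, with no iteration. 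The cases $1<p\le 2$ and the final limit $\eps\downarrow 0$ are handled as in the paper and are fine. In summary: same skeleton, correct arithmetic, but the decision to keep the non-regularised solution $u$ leaves the integration by parts unjustified precisely where the degeneracy of the $p$-Laplacian bites, namely $p\ge 3$.
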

This proposition is proved in Section \ref{preliminary results}.
As a consequence of the above proposition we have the following
\begin{theorem}\label{regtheorem}
Let  $\Omega$ be a bounded smooth domain of $\mathbb{R}^n,$ and $u$ be weak solution to \eqref{EQ} with $\nabla u\neq \bf{0}$ on $\partial \Omega$. Let $q\geq 2$ and $p$ be such that \[
|p-2|<\frac{1}{C(n,q)}\,,
\] with $C(n,q)$ given by \eqref{calderon0}.
Then, if $1<p\leq 2,$
there holds
\begin{equation*}
u\in W^{2,q}(\Omega)\,.
\end{equation*}

In the case $p>2,$ the same conclusion holds provided $p$ is in addition such that $q<\frac{p-1}{p-2}.$ \newline
As a consequence, for any $\gamma \in (0,1)$ there exist values $1<p_1<2<p_2$ such that for all $p\in (p_1,p_2)$ there holds $u\in C^{1,\gamma}(\overline{\Omega}).$
\end{theorem}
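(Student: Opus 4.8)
The plan is to make rigorous the formal computation sketched in the introduction. First I would rewrite the equation $-\Delta_p u = f$ away from the critical set $Z_u$ in the pointwise form
\[
-\Delta u \,=\, (p-2)\,\frac{\Delta_\infty u}{|\nabla u|^{2}}\,+\,\frac{f}{|\nabla u|^{p-2}}\,,
\]
which holds a.e. on $\Omega\setminus Z_u$; on $Z_u$ itself $D^2 u = 0$ a.e.\ by Stampacchia's theorem, so the identity $\Delta u = (p-2)|\nabla u|^{-2}\Delta_\infty u \chi_{\{\nabla u\neq 0\}} + f|\nabla u|^{-(2-p)}\chi_{\{\nabla u\neq 0\}}$ can be read as an identity in, say, $L^1(\Omega)$ once we know each piece is integrable. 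The key pointwise bound is $|\Delta_\infty u| = |(D^2u\,\nabla u,\nabla u)| \le |\nabla u|^2\,\|D^2 u\|$, so $|\nabla u|^{-2}|\Delta_\infty u| \le \|D^2 u\|$ pointwise on $\{\nabla u\neq 0\}$; hence the ``bad'' term is controlled by $|p-2|\,\|D^2u\|$ itself. The strategy is a fixed-point / absorption argument: apply Calder\'on--Zygmund \eqref{calderon0} to $w=u$ (legitimate since $u\in W^{1,p}_0(\Omega)\cap C^{1,\alpha}(\overline\Omega)$ and $u=0$ on $\partial\Omega$), obtaining
\[
\|D^2 u\|_{L^q(\Omega)} \le C(n,q)\,\|\Delta u\|_{L^q(\Omega)} \le C(n,q)\,|p-2|\,\|D^2 u\|_{L^q(\Omega)} \,+\, C(n,q)\,\Big\| \tfrac{f}{|\nabla u|^{p-2}}\Big\|_{L^q(\Omega)}.
\]
Since $|p-2| < 1/C(n,q)$, the coefficient $C(n,q)|p-2| < 1$ and the first term on the right can be absorbed into the left, yielding $\|D^2u\|_{L^q(\Omega)} \le \frac{C(n,q)}{1-C(n,q)|p-2|}\,\|f/|\nabla u|^{p-2}\|_{L^q(\Omega)}$, \emph{provided} the right-hand norm is finite.

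That finiteness is exactly where Proposition \ref{RHSregpart} enters. When $1<p\le 2$ the function $|\nabla u|^{p-2}$ is bounded below away from zero only... no — rather, $|\nabla u|^{2-p}$ is bounded (since $\nabla u\in L^\infty$), so $f/|\nabla u|^{p-2} = f\,|\nabla u|^{2-p}\in L^\infty\subset L^q$ directly; more cleanly, Proposition \ref{RHSregpart} gives $\int_\Omega f^2 |\nabla u|^{-q(p-2)} \le C$ for every $q<\infty$, and a routine interpolation upgrades an $L^2$-type bound with arbitrarily large weight-exponent to an $L^q$ bound on $f/|\nabla u|^{p-2}$ for any $q$; I would spell out this elementary step. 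When $p>2$, Proposition \ref{RHSregpart} gives the same with the restriction $q<\frac{p-1}{p-2}$, which is precisely the extra hypothesis imposed on $q$ in this case. In either regime we conclude $u\in W^{2,q}(\Omega)$.

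For the final $C^{1,\gamma}$ assertion, fix $\gamma\in(0,1)$ and choose $q>n$ large enough that $1-\frac{n}{q} \ge \gamma$; by Sobolev embedding $W^{2,q}(\Omega)\hookrightarrow C^{1,\,1-n/q}(\overline\Omega)\hookrightarrow C^{1,\gamma}(\overline\Omega)$. It then suffices to exhibit an interval $(p_1,p_2)\ni 2$ on which the hypotheses of the theorem hold for this $q$: take $p_1,p_2$ with $|p_i-2| < 1/C(n,q)$, and in addition, for the side $p>2$, shrink $p_2$ so that $\frac{p-1}{p-2} > q$ for all $p\in(2,p_2)$, i.e.\ $p_2 < \frac{q-1}{q-2}$ (note $\frac{q-1}{q-2}\to 1$ is not the issue — here $q$ is fixed, so $\frac{p-1}{p-2}\to\infty$ as $p\to 2^+$, and the constraint is satisfied for $p$ close enough to $2$). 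For $1<p\le 2$ no such extra constraint is needed. Thus for all $p\in(p_1,p_2)$ we have $u\in W^{2,q}(\Omega)\hookrightarrow C^{1,\gamma}(\overline\Omega)$.

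The main obstacle I anticipate is the justification that Calder\'on--Zygmund \eqref{calderon0} may be applied to $u$ with the stated constant: one needs $\Delta u\in L^q(\Omega)$ \emph{a priori} to even write the estimate, but at the start we only know $\Delta u \in L^2$ (from Proposition \ref{Hesspart} when $p<3$) or less. The clean fix is an approximation/bootstrap: either regularise the equation (replace $|\nabla u|^{p-2}$ by $(\eps+|\nabla u|^2)^{(p-2)/2}$, run the absorption argument for the smooth solutions $u_\eps$ with $\eps$-uniform constants, and pass to the limit using Proposition \ref{RHSregpart}-type bounds that are uniform in $\eps$), or argue by a truncation $T_k(\Delta u)$ and monotone convergence to upgrade integrability in finitely many steps. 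Handling this limiting procedure — and checking that the constant $C(n,q)$ and the weighted bounds survive it — is the one genuinely technical point; everything else is the absorption inequality above plus bookkeeping with Sobolev embedding.
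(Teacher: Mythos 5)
Your proposal is correct and, once you invoke the regularisation $(\eps+|\nabla u|^2)^{(p-2)/2}$ to make the Calder\'on--Zygmund estimate legitimately applicable to the smooth approximants $u_\eps$, it coincides with the paper's own proof: the absorption inequality with $C(n,q)|p-2|<1$, the use of the $\eps$-uniform weighted bound on $f/(\eps+|\nabla u_\eps|^2)^{(p-2)/2}$ (Proposition \ref{RHSreg}, which is the uniform-in-$\eps$ version of Proposition \ref{RHSregpart}), the passage to the limit by weak $W^{2,q}$ compactness together with the $C^{1,\alpha'}$ convergence $u_\eps\to u$, and the Morrey embedding for the $C^{1,\gamma}$ statement. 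The only cosmetic difference is that the paper never attempts the direct argument on $u$ itself, working from the outset with the regularised equation \eqref{classic}, and it handles the finiteness of $\|f/|\nabla u|^{p-2}\|_{L^q}$ for $q\geq 2$ simply via the boundedness of $f$ rather than an interpolation.
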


As an application of the above result we consider
\begin{equation}\label{Ex}
\begin{cases}
-\Delta_p\,u=u^{p^*}-\Lambda u^{p-1} & \text{in}\,\Omega,\\
u \geq 0,&  \text{in}\,\Omega,\\
u=0,&  \text{on}\,\partial\Omega\,,
\end{cases}
\end{equation}
where $\Omega$ is a smooth bounded domain of $\R^n,$ $1<p<n,$ and $p^*=np/(n-p)$ is the critical Sobolev exponent. The case $p=2$ and $n\geq 3$ had been extensively studied since the pioneering papers of Brezis and Nirenberg \cite{breni}, Coron \cite{coron}, and Bahri and Coron \cite{bahricoron}, inspiring
a very broad literature on related existence, non-existence, multiplicity, symmetry, and classification results. The case $p\neq 2$ has several interesting features and related open problems, see e.g. \cite{GV1,GV2} and more recently  \cite{ClTi}, \cite{MeScSq},\cite{MePa}, \cite{MeWi}. A solution $u$ to problem (\ref{Ex}) can be found for suitable $\Lambda$ as nonnegative constrained minimiser, by overcoming well-known lack of compactness phenomena for the minimising (or Palais-Smale) sequences which typically occur in the presence of the critical Sobolev exponent. In particular $u\in L^{\infty}(\Omega)$  by variant of Moser's iteration (see e.g. Appendix E in \cite{Peral}, and \cite{Tr}), and hence  $u\in C^{1,\alpha}(\overline{\Omega}).$ The positivity is due to the maximum principle (\cite{V}, \cite{PSB}). In particular Hopf's boundary point lemma implies that $\nabla u\neq \bf{0}$ on $\partial \Omega, $ therefore $u$ satisfies the hypotheses of Theorem \ref{regtheorem}. \\
We point out that the basic regularity estimate given by Proposition \ref{Hesspart} on the second derivatives up the boundary, is known essentially only in those cases when Hopf's boundary point lemma (see \cite{V}) can be applied, this to rule out the existence of critical points of the solutions along the boundary. For this reason Hopf's boundary point lemma allows to use standard elliptic regularity theory. However there are many cases of interest when the assumptions of Hopf's lemma are not satisfied. In fact, if for instance we consider a sign-changing solution whose nodal line touches the boundary, then necessarily, at the touching point, the gradient of $u$ vanishes. This is the case when the second eigenfunction of the $p$-Laplacian with Dirichlet boundary condition is considered:
\begin{equation}\label{eigen}
\left\{
\begin{array}{lll}

  -\Delta_p u_2\,=\,\lambda_2|u_2|^{p-2}u_2\quad &\mathrm{in}& \,\,\Omega   \\
  \,\,\, u_2=0 &\mathrm{on}& \,\, \partial \Omega.
\end{array}
\right.
\end{equation}

In this case $u_2$ has exactly two nodal regions, as it had been pointed out in \cite{CFG}, see also \cite{lind2}. Motivated by the above problem and similar ones, it is natural to try to obtain regularity results of the same flavour of Theorem \ref{regtheorem} which possibly hold for those equations involving a source term of the form $f(u),$ allowed to be strictly negative near the boundary.\\
To this aim, for $p>1$ and $n\geq 2$ we consider the following boundary value problem.
Let $u$ be a weak solution of:
\begin{equation}\label{eqfu}
\begin{cases}
-\Delta_p\,u=f(u) & \text{ in }\Omega\\
u=0 &  \text{ on }\partial\Omega\,
\end{cases}
\end{equation}
where $\Omega$ be a bounded smooth domain of $\mathbb R^n,$ $f: \R \rightarrow \R$ is continuos, and only when $p>2$ we assume in addition that $f$ is locally Lipschitz satisfying

\begin{eqnarray}
&H1)\nonumber &\qquad \exists \,\gamma>0,\,k>0\,:\, \lim_{t\to 0}\frac{f(t)}{|t|^{k-1}t}=\gamma\\
&H2) \nonumber&\qquad  f(0)=0,\quad f(t)\cdot t>0\quad\forall t\in\R\backslash\{0\}.
\end{eqnarray}

\begin{theorem}\label{regtheorem2}
Let  $\Omega$ be a bounded smooth domain of $\mathbb{R}^n,$ $f$ continuous, and $u \in C^{1,\alpha}(\overline{\Omega})\cap W^{1,p}_0(\Omega)$ be weak solution to \eqref{eqfu}.  Let $q\geq 2$ and $p$ be such that \[
|p-2|<\frac{1}{C(n,q)}\,,
\] with $C(n,q)$ given by \eqref{calderon0}.

Then, if $1<p\leq 2,$
there holds
\begin{equation*}
u\in W^{2,q}(\Omega)\,.
\end{equation*}
In the case $p>2,$ the same conclusion holds provided $f$ is locally Lipschitz and satisfies $H1),H2),$ and $p,q$ and $k$ are such that
\begin{equation}\label{additionalass}
\textrm{max}\Big(\frac{2(k+1)}{k}, q\Big)<\frac{p-1}{p-2}.
\end{equation}\newline
As a consequence, for any $\gamma \in (0,1)$ there exist values $1<p_1<2<p_2$ such that for all $p\in (p_1,p_2)$ there holds $u\in C^{1,\gamma}(\overline{\Omega}).$
\end{theorem}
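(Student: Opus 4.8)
The plan is to follow the strategy already sketched for Theorem~\ref{regtheorem}: rewrite the equation so that $-\Delta u$ splits into an anisotropic term, controlled pointwise by $|p-2|\,\|D^2u\|$, plus the term $f(u)/|\nabla u|^{p-2}$, and then close the estimate via the Calder\'on-Zygmund inequality \eqref{calderon0} together with $|p-2|<1/C(n,q)$. The only genuinely new point is that, since we no longer assume $\nabla u\neq\mathbf{0}$ on $\partial\Omega$, neither Proposition~\ref{Hesspart} nor Proposition~\ref{RHSregpart} is available up to the boundary, and their analogues must be produced from the structural assumptions on $f$. To make the absorption rigorous I would work with the $\eps$-regularized problems
\[
-{\rm div}\big(\neps^{\frac{p-2}{2}}\n\ue\big)=f_\eps\ \text{ in }\Omega,\qquad \ue=0\ \text{ on }\partial\Omega,
\]
where $f_\eps$ is a smooth approximation of the \emph{fixed} source $f(u)$ (which is bounded since $u\in L^\infty(\Omega)$ and $f$ is continuous). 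For each fixed $\eps$ this problem is uniformly elliptic (the coefficients are bounded and bounded away from $0$ once one knows $\n\ue\in L^\infty$), its solution $\ue$ is classical up to $\partial\Omega$, and $\ue\to u$ in $C^{1,\alpha'}(\overline\Omega)$ for some $\alpha'\in(0,\alpha)$ by the regularity theory of \cite{Di,Li} and strict monotonicity of $-\Delta_p$. For each such $\ue$ one has the pointwise identity
\[
-\Delta\ue=(p-2)\,\frac{(D^2\ue\,\n\ue,\n\ue)}{\neps}+\frac{f_\eps}{\neps^{\frac{p-2}{2}}},
\]
whose first summand is $\le|p-2|\,\|D^2\ue\|$ in modulus. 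Since $\ue\in W^{2,q}(\Omega)$ a priori, \eqref{calderon0} and $|p-2|<1/C(n,q)$ give, after absorption,
\[
\|D^2\ue\|_{L^q(\Omega)}\ \le\ \frac{C(n,q)}{1-C(n,q)\,|p-2|}\,\Big\|\,f_\eps\,\neps^{-\frac{p-2}{2}}\Big\|_{L^q(\Omega)},
\]
so the whole matter reduces to an $\eps$-uniform bound for the right-hand side; letting $\eps\to0$ and using weak lower semicontinuity of the $L^q$-norm then yields $u\in W^{2,q}(\Omega)$ with the same constant.

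When $1<p\le2$ this uniform bound is elementary and uses only the continuity of $f$ and $u\in C^{1,\alpha}(\overline\Omega)$: since $p-2\le0$, the weight $\neps^{-\frac{p-2}{2}}=\neps^{\frac{2-p}{2}}\le(1+\|\n\ue\|_\infty^{2})^{\frac{2-p}{2}}$ is bounded uniformly in $\eps$, while $f_\eps$ is uniformly bounded because $f(u)\in C(\overline\Omega)$. When $p>2$ the weight is singular where $\n u$ degenerates, and the right-hand side is the heart of the matter. Away from $\partial\Omega$ the required $\eps$-uniform control is the regularized form of Proposition~\ref{RHSregpart}, which rests on the weighted Hessian estimate underlying Proposition~\ref{Hesspart} and needs $q<\frac{p-1}{p-2}$. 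Near a boundary point at which $\n u\neq\mathbf{0}$ the gradient stays bounded away from $0$ in a neighbourhood, the equation is uniformly elliptic, and the estimate is classical. The real issue is a neighbourhood of a \emph{degenerate} boundary point $x_0$, i.e.\ with $\n u(x_0)=\mathbf{0}$ --- exactly the situation of a nodal line of $u$ reaching $\partial\Omega$. There I would use that $u\in C^{1,\alpha}(\overline\Omega)$ vanishes on $\partial\Omega$, so that, writing $d(x)=\mathrm{dist}(x,\partial\Omega)$, one has $|u(x)|\le C\,d(x)$, and hence by $H1)$ both $|f(u(x))|\le C\,|u(x)|^{k}\le C\,d(x)^{k}$ and, crucially, $|\n(f(u))(x)|=|f'(u(x))|\,|\n u(x)|\le C\,d(x)^{k-1}\,|\n u(x)|$; the extra factor $|\n u|$ in this last bound is precisely what compensates the singular weight when one runs the differentiated-equation / Caccioppoli scheme of \cite{DS1,T} with a cutoff supported near $x_0$, the remaining boundary contributions being absorbed through a Hardy-type inequality using $u=0$ on $\partial\Omega$. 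It is in matching these decay rates that the exponent $k$ enters and that condition \eqref{additionalass} --- equivalently $q<\frac{p-1}{p-2}$ together with $p<3-\frac{2}{k+2}$ --- is required. Obtaining this $\eps$-uniform weighted estimate up to a degenerate boundary point is what I expect to be the main obstacle; once it is available, the displayed inequality above closes as before.

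Finally, for the concluding assertion: given $\gamma\in(0,1)$ fix $q>\max\{2,\,n/(1-\gamma)\}$, so that $W^{2,q}(\Omega)\hookrightarrow C^{1,\gamma}(\overline\Omega)$ on the smooth domain $\Omega$ by Morrey's embedding theorem. The admissible $p$ then form an interval around $2$: on $(1,2]$ the only requirement is $|p-2|<1/C(n,q)$, while on $(2,\infty)$ one asks in addition that \eqref{additionalass} hold, which again confines $p$ to a right-neighbourhood of $2$. Hence there are values $1<p_1<2<p_2$ such that every such $p$ lies in $(p_1,p_2)$, and for all these $p$ the corresponding solution belongs to $W^{2,q}(\Omega)\subset C^{1,\gamma}(\overline\Omega)$.
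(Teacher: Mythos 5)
Your outer framework (regularize, split $-\Delta u_\e$ into the anisotropic term plus $f(u)\neps^{-\frac{p-2}{2}}$, apply \eqref{calderon0}, absorb using $|p-2|<1/C(n,q)$, pass to the limit) coincides with the paper's, and your reduction of everything to an $\e$-uniform bound on $\|f(u)\neps^{-\frac{p-2}{2}}\|_{L^q(\Omega)}$ is exactly right, as is the case $1<p\le 2$ and the final Morrey argument. But for $p>2$ that uniform bound is precisely the new content of the theorem, and you leave it as an acknowledged ``main obstacle'' with only a sketch; moreover the sketch as written would not close. You propose to localize near a degenerate boundary point and exploit the decay $|u(x)|\le Cd(x)$, $|f(u(x))|\le Cd(x)^k$, together with the bound $|\nabla (f(u))|=|f'(u)||\nabla u|\le C|u|^{k-1}|\nabla u|$. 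That last inequality does not follow from the hypotheses: $H1)$ controls the behaviour of $f$ itself near $0$, not of $f'$, and $f$ is only assumed locally Lipschitz, so you may only bound $|f'(u)|$ by a constant. Your sketch also makes no use of the sign condition $H2)$, which in the paper is indispensable.

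The paper's actual mechanism is global rather than local, and it is worth recording because it is what makes \eqref{additionalass} appear. First, a \emph{weighted} Hessian estimate (Proposition \ref{stima hessiano}) is proved by testing the linearized equation \eqref{linearizzato} with $T_\xi(u_{\e,i})\,u^2$: the extra factor $u^2$, which vanishes on $\partial\Omega$, is what replaces the hypothesis $\nabla u\neq\bf{0}$ on $\partial\Omega$ and yields $\io\neps^{\frac{p-2-\beta}{2}}|D^2u_\e|^2u^2\le C$ uniformly in $\e$. Second, testing \eqref{equazione eps debole} with $\psi=|u|^{k+1}u\,\neps^{-\frac{p-2}{2}s}$ and using $H2)$ together with the lower bound $f(t)/(|t|^{k-1}t)\ge\lambda'$ from $H1)$ produces $\io|u|^{2(k+1)}\neps^{-\frac{p-2}{2}s}\le C$ for $1\le s<\frac{p-1}{p-2}$, the right-hand side being absorbed thanks to the $u^2$-weighted Hessian bound. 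Third, the upper bound $|f(t)|\le\lambda'|t|^k$ converts this into $\io\bigl(|f(u)|\neps^{-\frac{p-2}{2}}\bigr)^q\le C$, the comparison $|u|^{kq}\le c|u|^{2(k+1)}$ (or H\"older when $q<\frac{2(k+1)}{k}$) being exactly where the threshold $\frac{2(k+1)}{k}<\frac{p-1}{p-2}$ in \eqref{additionalass} enters --- your algebraic reformulation $p<3-\frac{2}{k+2}$ of that threshold is correct, but the condition arises from this interpolation between the growth of $f$ and the available weighted estimate, not from matching powers of $d(x)$ near a degenerate boundary point.
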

Going back to our model problem (\ref{eigen}) we can see that Theorem \ref{regtheorem2} is applicable to $u_2,$ where we set $k:=p-1.$ In particular as far as  the degenerate case $p>2$  is concerned, if $q<\frac{2(k+1)}{k}=\frac{2p}{p-1},$ then the condition $\frac{2(k+1)}{k}<\frac{p-1}{p-2}$ holds if and only if $p<1+\sqrt 2.$ Then $u\in W^{2,q}(\Omega),$ provided $p<\min\Big(2+\frac{1}{C(n,q)}, 1+\sqrt2\Big).$ \newline
In order to prove Theorem \ref{regtheorem2} we need a weaker form of equation (\ref{kdjlgadklfgbaò}) which holds in the case $p>2$ when we do not assume $\nabla u\neq \bf{0}$ on $\partial \Omega.$ To this aim we perform a linearisation argument which is in the spirit of \cite{DS1} and \cite{Sci1,Sci2}.

\subsection{Related questions}
There are several related questions which are left open by the present paper to be considered in future projects. We believe that the major ones are the following.
\begin{itemize}
\item [] A) Determining an optimal value for the constant $C(n,q)$ involved in the Calder\'on-Zygmund estimate (\ref{calderon0}) would give sharper regularity results in Theorem \ref{regtheorem} and Theorem \ref{regtheorem2}.
\item [] B) We wonder whether results of similar flavour could be obtained for solutions $u\in W^{1,p}_{\textrm{loc}}(\Omega)$ such that
\begin{equation*}
\int_\Omega |\nabla u|^{p-2}\nabla u\,\nabla \psi\,dx\,=\,\int_\Omega f\psi \,dx\,,
\end{equation*}
for all $\psi\in C^{\infty}_{c}(\Omega)$.
Even the case $f\equiv 0$ of $p$-harmonic functions, especially in higher dimensions $n>2,$ would be significative. In this case one should take into account the effect of the boundary of $\Omega,$ which we do not see in our context because of the Dirichlet boundary condition. This is the reason why the regularising phenomenon we highlight is simply driven by the constant $C(n,q).$
\item [] C) To prove Theorem \ref{regtheorem2} a weaker form of the weighted Hessian estimate has been obtained and used. It would be interesting to check, under the same assumptions of Theorem \ref{regtheorem2}, whether or not the same Hessian estimate (\ref{kdjlgadklfgbaò}) still holds. This could be perhaps accomplished by flattening the boundary and trying to obtain a local version of the same estimate in the spirit of \cite{DS1,Sci1,Sci2}, which is known to the best of our knowledge, only when Hopf's boundary point lemma is applicable. However, many nontrivial technical difficulties  arise with such approach when trying to obtain our summability results on the second derivatives of the solutions.
\item[] D) It would be interesting to weaken our assumptions on $f,$  for instance by considering the non-autonomous case, i.e. the case with a nonlinearity either of the form $f(x,u)$ or more in general $f(x,u, \nabla u).$
\end{itemize}
 \subsection{Organization of the paper}
The paper is organized as follows. In Section \ref{preliminary results} we collect some preliminary results which are useful to prove Theorem \ref{regtheorem}, which we will prove in Section \ref{Appl}; in Section \ref{preliminary results} in particular we prove Proposition \ref{RHSregpart}, which follows from the more general Proposition \ref{RHSreg}.  In Section \ref{Hessian reg sect}, which looks slightly more technical, we derive a Hessian estimate which plays the same role of Proposition \ref{RHSreg} in the new setting of Theorem \ref{regtheorem2}. Finally, in Section \ref{Appl2} we prove Theorem \ref{regtheorem2}.\\
\subsection{Notation} We use the following standard notation:
\begin{itemize}
      \item Limits for sequences of functions $(u_\varepsilon)_{\varepsilon>0}$ as $\varepsilon \rightarrow 0^+$ are meant to be performed for suitable sequences $\varepsilon_i\rightarrow0^+,\, i\rightarrow  \infty.$
	\item $\Delta_p u:={\rm div}(|\nabla u|^{p-2}\nabla u)$  is the classical $p$-Laplacian operator.
	\item $|x|$ is the euclidean norm of $x \in \R^n$.
	\item $\chi_\Omega$ is the characteristic function of a measurable set $\Omega$.
	\item $C^k(\Omega)$ is the space of real valued functions $k$ times continuously differentiable on $\Omega$.
	\item $C^\infty(\Omega)$ is the space of real valued functions  which are infinitely times continuously differentiable on $\Omega$.
	\item $C^k_c(\Omega),$ $C^\infty_c(\Omega)$ are the spaces made up of compactly supported functions of respectively $C^k(\Omega),$ and $C^\infty(\Omega)$
	\item $C^{k,\alpha}(\Omega)$ and $C^{k,\alpha}_{\textrm{loc}}(\Omega)$ are classical H\"older spaces.	
	\item $L^q(\Omega)$and $L^q_{\textrm{loc}}(\Omega)$with $\Omega \subset \R^n$ measurable set and $q \geq 1$, are classical Lebesgue space.
	\item $W^{k,p}(\Omega)$ and $W^{k,p}_{\textrm{loc}}(\Omega)$ are classical Sobolev
	 spaces.
	\item $c_1, ..., c_k, C_1,... C_k, c, c',c''..,C, C',..C''...$ are positive constants.
	\item $\int_{...} ...$ denotes standard Lebesgue integration.	
	\end{itemize}

\section{Preliminaries} \label{preliminary results}
Throughout this section we assume that $\Omega$ is a bounded smooth domain of $\mathbb{R}^n,$ and $u$ is a weak solution to \eqref{EQ} with $\nabla u\neq \bf{0}$ on $\partial \Omega$. \\
We deal now with the proof of Proposition \ref{RHSregpart} which will follow from Proposition \ref{RHSreg} as a particular case ($\varepsilon=0$).\\
\subsection{A classical approximation}
We start observing that in order to deal with the formal expression

\begin{equation}\nonumber
\Delta _p u\,=\, |\nabla u|^{p-2}\Delta u+(p-2)|\nabla u|^{p-4}\Delta_\infty u
\end{equation}
 mentioned in the introduction,  we consider for $\varepsilon\in [0,1)$ the parametric problem
\begin{equation}\label{EQreg}
\begin{cases}
-\text{div}\left(\e+|\nabla u_\e|^2\right)^{\frac{p-2}{2}}\,\nabla u_\e=f, & \text{ in }\Omega\\
u_\e=0.&  \text{ on }\partial\Omega\,.
\end{cases}
\end{equation}


It is standard to see that, when $\varepsilon\in(0,1)$ the above problem regularises the solution to (\ref{EQ}), as by standard regularity theory it follows that there exists a unique solution

\[
u_\e\in C^{2,\alpha} (\overline{\Omega})\cap W^{1,p}_0(\Omega)\,;
\]
$u_\varepsilon$ is therefore a classical solution of
\begin{equation}\label{classic}
-\Delta u_\e\,=\, (p-2)\frac{\left(D^2u_\e\,\nabla u_\e\,,\,\nabla u_\e\right)}{\left(\e+|\nabla u_\e|^2\right)}
\,+\,\frac{f}{\left(\e+|\nabla u_\e|^2\right)^{\frac{p-2}{2}}}
\end{equation}
in $\Omega.$\\


Note that, by \cite{Di,Li,T} we have that
\[
\|u_\e\|_{C^{1,\alpha}(\overline{\Omega})}\leq C\,.
\]
Therefore, passing if necessary to a subsequence, by classical Arzela-Ascoli compactness theorem we have
\begin{equation*}
u_\e\overset{C^{1,\alpha'}(\overline{\Omega})}{\longrightarrow} w\,
\end{equation*}
for some $0<\alpha'<\alpha.$ It follows easily that $w$ is a weak solution to \eqref{EQ} and consequently, by uniqueness of the solution of \eqref{EQ}, we get
\begin{equation}\label{per Fatou}
u_\e\overset{C^{1,\alpha'}(\overline{\Omega})}{\longrightarrow} u\,.
\end{equation}


\subsection{On second derivatives}
\noindent We recall that by the classical Calder\'on-Zygmund theory for elliptic operators we have the following:

\begin{lemma}\label{caldlemma}
Let $\Omega$ be a bounded smooth domain of $\mathbb{R}^n$ and let $w\in W^{2,q}_0(\Omega)$. Then there exists a positive constant $C=C(n,q)$ such that

\begin{equation}\label{calderon}
\|D^2 w\|_{L^q(\Omega)}\leq C\|\Delta w\|_{L^q(\Omega)}.\,
\end{equation}

\end{lemma}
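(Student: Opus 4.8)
The final statement, Lemma \ref{caldlemma}, is the classical second–order Calder\'on--Zygmund inequality, so in practice I would simply cite Corollary 9.10 of \cite{GiTr}; what follows is the line of argument I would reproduce if a self-contained proof were wanted. The plan is first to reduce to smooth compactly supported functions: since $W^{2,q}_0(\Omega)$ is by definition the closure of $C^\infty_c(\Omega)$ in the $W^{2,q}$–norm, and since both sides of \eqref{calderon} are continuous with respect to that norm (the right-hand side is controlled by $\|\Delta\,\cdot\,\|_{L^q}\le n\,\|D^2\,\cdot\,\|_{L^q}$), it suffices to establish \eqref{calderon} for $w\in C^\infty_c(\Omega)$ and then pass to the limit. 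Given such a $w$, I would extend it by zero to all of $\mathbb{R}^n$, obtaining $w\in C^\infty_c(\mathbb{R}^n)$; this extension changes neither $\|D^2 w\|_{L^q}$ nor $\|\Delta w\|_{L^q}$, which is exactly why the resulting constant depends only on $n$ and $q$ and not on $\Omega$.

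Next I would use the Newtonian potential representation $w=\Gamma\ast(\Delta w)$, valid for $w\in C^\infty_c(\mathbb{R}^n)$ because $\Delta(\Gamma\ast g)=g$ and $\Gamma\ast g$ decays at infinity. Differentiating twice and handling the singularity of $\Gamma$ by the usual excision–of–a–small–ball argument yields, for every pair $i,j$,
\[
\partial_{ij} w(x)\;=\;\mathrm{p.v.}\!\int_{\mathbb{R}^n}\partial_{ij}\Gamma(x-y)\,\Delta w(y)\,dy\;+\;c_{ij}\,\Delta w(x),
\]
with $c_{ij}$ a purely dimensional constant coming from the limiting surface integral. The kernel $K_{ij}:=\partial_{ij}\Gamma$ is smooth away from the origin, homogeneous of degree $-n$, and has vanishing mean over spheres centred at the origin; equivalently, its Fourier multiplier is the bounded, $0$–homogeneous symbol $m_{ij}(\xi)=\xi_i\xi_j/|\xi|^2$, which is $C^\infty$ on $\mathbb{R}^n\setminus\{0\}$ and therefore satisfies the Mikhlin--H\"ormander condition $|\partial^\alpha m_{ij}(\xi)|\le C_\alpha\,|\xi|^{-|\alpha|}$.

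Finally, by the Mikhlin--H\"ormander multiplier theorem (or, in real-variable form, by the Calder\'on--Zygmund theorem applied to $K_{ij}$: start from the trivial $L^2$ bound $\|m_{ij}\|_\infty\le 1$, upgrade to a weak--$(1,1)$ estimate via the cancellation of $K_{ij}$, then interpolate and dualize), each operator $g\mapsto \mathrm{p.v.}\,K_{ij}\ast g$ is bounded on $L^q(\mathbb{R}^n)$ for every $1<q<\infty$ with operator norm $C=C(n,q)$. Hence $\|\partial_{ij}w\|_{L^q}\le C(n,q)\,\|\Delta w\|_{L^q}$ for each of the $n^2$ entries, and summing gives \eqref{calderon}; density then returns the estimate for all $w\in W^{2,q}_0(\Omega)$. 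The only genuinely nontrivial ingredient — and the step I expect to be the main obstacle — is the $L^q$ boundedness of the second-order Riesz transforms, i.e.\ the verification of the H\"ormander cancellation condition for $K_{ij}$ together with the interpolation machinery; everything else is bookkeeping, which is why one usually just invokes \cite{GiTr}.
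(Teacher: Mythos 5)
Your proposal is correct and matches the paper, whose entire proof of Lemma \ref{caldlemma} is the citation ``See e.g. Corollary 9.10 in \cite{GiTr}'' — exactly your first move. The additional sketch via the Newtonian potential representation and the $L^q$ boundedness of the second-order Riesz transforms is the standard argument behind that corollary and is accurate, including the observation that extension by zero is what makes $C$ independent of $\Omega$.
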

See e.g. Corollary 9.10 in \cite{GiTr}.\\
We will need the following weighted Hessian regularity result.
\begin{proposition}\label{Hess}
Let $u_\varepsilon$ be solution to $(\ref{EQreg})$ and $f$ as in Proposition \ref{Hesspart}.
Then, for any $\beta<1$, there exists a constant $C_\beta=C_\beta(n,p,f)>0$ such that

$$\int_{\Omega}\left(\e+|\nabla u_\e|^2\right)^{\frac{p-2-\beta}{2}}  \|D^2 u_\e\|^2 <C_\beta
$$
for all $\e \in [0,1).$

\end{proposition}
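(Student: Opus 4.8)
The plan is to establish the uniform-in-$\varepsilon$ weighted estimate
$$\int_{\Omega}\left(\e+|\nabla u_\e|^2\right)^{\frac{p-2-\beta}{2}}\|D^2u_\e\|^2<C_\beta$$
by adapting the differentiated-equation argument of \cite{DS1,Sci1,Sci2} to the regularised problem \eqref{EQreg}, and then passing to the limit for the case $\varepsilon=0$. First I would differentiate \eqref{EQreg} with respect to $x_i$: writing $a_\e(\xi)=(\e+|\xi|^2)^{(p-2)/2}$, the function $\uei:=\partial_i u_\e$ (which is $C^{1,\alpha}$ by the Schauder regularity of $u_\e$, indeed $u_\e\in C^{2,\alpha}$ so $\uei\in C^{1,\alpha}$) solves the linear equation
$$-\,\mathrm{div}\Big(\big(\e+|\nabla u_\e|^2\big)^{\frac{p-2}{2}}\nabla\uei+(p-2)\big(\e+|\nabla u_\e|^2\big)^{\frac{p-4}{2}}(\nabla u_\e\cdot\nabla\uei)\,\nabla u_\e\Big)=\partial_i f$$
in the weak sense. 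I would then test this with $\uei\,(\e+|\nabla u_\e|^2)^{-\beta/2}\,\varphi^2$ for a cutoff $\varphi$, or more cleanly (since we want an estimate up to the boundary) test with $\uei\,(\e+|\nabla u_\e|^2)^{-\beta/2}$ directly — this is legitimate because on $\partial\Omega$ we have $\nabla u_\e\neq 0$ (by the $C^{1,\alpha'}$ convergence \eqref{per Fatou} together with the hypothesis $\nabla u\neq \mathbf 0$ on $\partial\Omega$, uniformly for small $\varepsilon$), so the weight $(\e+|\nabla u_\e|^2)^{-\beta/2}$ is bounded near the boundary and the boundary term is controlled. Summing over $i$, the principal part produces, modulo lower-order terms, a quantity comparable to $\int_\Omega(\e+|\nabla u_\e|^2)^{\frac{p-2-\beta}{2}}\|D^2u_\e\|^2$ — here one uses the ellipticity of the matrix $a_\e(\xi)\mathrm{Id}+(p-2)a_\e'$-type term, whose eigenvalues are comparable to $(\e+|\nabla u_\e|^2)^{(p-2)/2}$ with constants depending only on $p$, and the elementary inequality $\sum_{i,j,k}(\partial_k\partial_i u_\e)^2\geq c\|D^2u_\e\|^2$. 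The right-hand side contributes $\int_\Omega |\nabla f|\,|\nabla u_\e|\,(\e+|\nabla u_\e|^2)^{-\beta/2}$, which is bounded uniformly in $\varepsilon$ since $f\in W^{1,r}$, $\|u_\e\|_{C^{1,\alpha}}\le C$, and $\beta<1$ makes the weight integrable near the critical set (this is exactly where the restriction $\beta<1$ enters, and where the $W^{1,r}$ hypothesis on $f$ is used).

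The main obstacle is the treatment of the extra terms arising because the weight $(\e+|\nabla u_\e|^2)^{-\beta/2}$ is itself differentiated when it appears inside the test function: this generates terms of the form $\int_\Omega (\e+|\nabla u_\e|^2)^{\frac{p-2-\beta}{2}-1}(\nabla u_\e\cdot\nabla\uei)(\text{second-order derivative})\,\uei$, which are of the same order as the good term. One must show these can be absorbed: the key point, following \cite{DS1}, is a careful choice of the exponent combined with the sign of $\beta$ (so that the differentiated-weight contribution has a favourable sign or is a small perturbation for $\beta<1$), together with Young's inequality to absorb a small multiple of the Hessian term into the left-hand side. The $(p-2)$-anisotropic term $\Delta_\infty$-type contribution requires the same bookkeeping; one exploits that $|(\nabla u_\e\cdot\nabla\uei)|^2\le |\nabla u_\e|^2\|D^2u_\e\|^2$ so that its weighted integral is again controlled by the good term.

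Once the estimate is proved for all $\varepsilon\in(0,1)$ with $C_\beta$ independent of $\varepsilon$, the case $\varepsilon=0$ follows by lower semicontinuity: by \eqref{per Fatou} we have $u_\e\to u$ in $C^{1,\alpha'}(\overline\Omega)$, hence $\nabla u_\e\to\nabla u$ uniformly and $D^2u_\e\rightharpoonup D^2u$ weakly in $L^2_{\mathrm{loc}}$ of the set $\{\nabla u\neq 0\}$ (indeed by the uniform bound $u_\e$ is bounded in $W^{2,2}_{\mathrm{loc}}$ of that set); on any set where $|\nabla u|\geq\delta>0$ the weight $(\e+|\nabla u_\e|^2)^{(p-2-\beta)/2}$ converges uniformly to a positive bounded function, so Fatou's lemma (applied on an exhaustion by such sets, and using Stampacchia's theorem that $D^2u=0$ a.e.\ on $Z_u$) gives $\int_\Omega |\nabla u|^{p-2-\beta}\|D^2u\|^2\le\liminf_\e\int_\Omega(\e+|\nabla u_\e|^2)^{(p-2-\beta)/2}\|D^2u_\e\|^2\le C_\beta$. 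This simultaneously recovers Proposition \ref{Hesspart}. I would remark that the whole computation is the verbatim analogue of Corollary 2.1 in \cite{DS1}, the only new ingredient being that the parameter $\varepsilon$ is carried through all inequalities and that Hopf's lemma is replaced by the standing assumption $\nabla u\neq\mathbf 0$ on $\partial\Omega$, which guarantees the admissibility of the global (non-cutoff) test function.
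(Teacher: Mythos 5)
Your overall architecture --- differentiate the regularised equation, test the linearised equation with a weighted copy of $\uei$, absorb the differentiated-weight contribution, and recover $\e=0$ by Fatou --- is essentially the route the paper takes (it simply delegates to Proposition \ref{Hesspart}, i.e.\ to Corollary 2.1 of \cite{DS1} with Hopf's lemma replaced by the assumption $\nabla u\neq\mathbf{0}$ on $\partial\Omega$). The genuine gap sits in the step you yourself call ``the main obstacle''. With your test function $\uei\,(\e+|\nabla u_\e|^2)^{-\beta/2}$, the differentiated-weight term becomes, after summing over $i$, the quantity $-\beta\int_\Omega(\e+|\nabla u_\e|^2)^{-\beta/2-1}B_\e\!\left(D^2u_\e\nabla u_\e,\,D^2u_\e\nabla u_\e\right)$, where $B_\e$ is the linearised coefficient matrix. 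This is a \emph{negative} contribution whose size can reach $\beta\,\max(1,p-1)$ times the principal (good) term, while the principal term only carries the lower ellipticity constant $\min(1,p-1)$. Absorption therefore closes only for $\beta<\min(1,p-1)/\max(1,p-1)$, not for every $\beta<1$ as the proposition requires; the two ranges coincide only at $p=2$. The device that delivers the full range --- the one actually used in \cite{DS1} and reproduced in the paper's own Proposition \ref{stima hessiano} --- is the componentwise truncated weight $T_\xi(\uei)=G_\xi(\uei)/|\uei|^\beta$: there the differentiated weight combines with the principal part into the single factor $G'_\xi(s)-(\beta+\vartheta)G_\xi(s)/s\to1-\beta-\vartheta>0$, so no competition between ellipticity constants arises, and one converts $|\uei|^{-\beta}\ge(\e+|\nabla u_\e|^2)^{-\beta/2}$ only at the very end.

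Two smaller points. First, your justification of the cutoff-free test function is too quick: $\uei$ does not vanish on $\partial\Omega$, so integration by parts produces a genuine boundary integral that boundedness of the weight alone does not kill. The clean resolution is that on a collar where $|\nabla u_\e|\ge c>0$ uniformly in $\e$ the equation is uniformly elliptic with uniformly $C^{\alpha}$ coefficients, so up-to-the-boundary Schauder (or $W^{2,2}$) estimates control the weighted Hessian integral there directly, and the interior is covered by the local estimate of \cite{DS1}; this is what the paper means by replacing Hopf's lemma with the boundary assumption on the gradient. Second, your Fatou passage to $\e=0$ is sound and matches how the paper argues in its other propositions, although for this particular statement the paper proves $\e=0$ first (Proposition \ref{Hesspart}) and observes that $\e>0$ follows by the same computation.
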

\begin{proof}
The case $\varepsilon =0$ which is the most delicate one, is given by Proposition \ref{Hesspart}. The case $\varepsilon>0$ follows by the same arguments. And this concludes the proof.
\end{proof}
\subsection{Proof of Proposition \ref{RHSregpart}}
The main result of the present section is the following
\begin{proposition}\label{RHSreg}
Let  $\Omega$ be a bounded smooth domain of $\mathbb{R}^n,$ and with $\varepsilon\in[0,1)$ let $u_\e$ be defined by \eqref{EQreg}, and in particular for $\varepsilon=0$ let $u_0:=u$ be the weak solution to \eqref{EQ} with $\nabla u\neq \bf{0}$ on $\partial \Omega$. Then, for $p>2$ and for any fixed $1\leq q<\frac{p-1}{p-2},$ there exists a constant $C=C(n,p,q,f)>0$ which does not depend on $\e$ such that

\begin{equation}\label{drdrdgig}
\int_{\Omega}\frac{f^2}{\Big[\left(\e+|\nabla u_\e|^2\right)^{\frac{(p-2)}{2}}\Big]^q}\leq C.
\end{equation}
The same bound holds for any $1\leq q<+\infty$ when $1<p\leq 2.$
\end{proposition}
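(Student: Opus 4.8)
The plan is to integrate the quantity in \eqref{drdrdgig} directly and split $\Omega$ into the region where the gradient is small and where it is comparable to a fixed constant. Fix $\delta>0$ and write $\Omega=\{(\e+|\nabla u_\e|^2)^{1/2}\le\delta\}\cup\{(\e+|\nabla u_\e|^2)^{1/2}>\delta\}$. On the second set the integrand is bounded by $\|f\|_\infty^2\,\delta^{-q(p-2)}|\Omega|$ (here one uses that $f\in W^{1,r}(\Omega)$ with $r>n$, hence $f\in L^\infty(\Omega)$ by Morrey), so the bad set is $\{x: (\e+|\nabla u_\e|^2)^{1/2}\le\delta\}$ and the job is to estimate
\[
I_\delta:=\int_{\{(\e+|\nabla u_\e|^2)^{1/2}\le\delta\}}\frac{f^2}{(\e+|\nabla u_\e|^2)^{\frac{q(p-2)}{2}}}.
\]
The idea, following \cite{DS1}, is to trade the negative power of the gradient against the weighted Hessian bound of Proposition \ref{Hess} via the coarea/Sobolev machinery: the level sets of $|\nabla u_\e|$ carry information on $\|D^2 u_\e\|$, and a power of $|\nabla u_\e|$ with a sufficiently negative exponent is integrable precisely because of \eqref{kdjlgadklfgba�}.

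Concretely, for $p>2$ I would prove that for every $\sigma<1$ one has $(\e+|\nabla u_\e|^2)^{-\sigma/2}\in L^1(\Omega)$ with a bound independent of $\e$, and in fact a stronger statement: $(\e+|\nabla u_\e|^2)^{-s}\in L^1(\Omega)$ for every $s<\tfrac{1}{p-2}$. Granting such a statement, one chooses $s=\tfrac{q(p-2)}{2}$, which satisfies $s<\tfrac1{p-2}$ exactly when $q<\tfrac{2}{(p-2)^2}$; since we are assuming only $q<\tfrac{p-1}{p-2}$ and $\tfrac{p-1}{p-2}\le \tfrac{2}{(p-2)^2}$ for $p$ near $2$, this is consistent, but in general one must be more careful and use the full strength of the argument of \cite{DS1} rather than the crude bound $f^2\le\|f\|_\infty^2$. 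The cleaner route is: on the bad set $f^2$ is not small, but the weight $(\e+|\nabla u_\e|^2)^{-q(p-2)/2}$ is controlled by combining $(\e+|\nabla u_\e|^2)^{(p-2-\beta)/2}\|D^2u_\e\|^2\in L^1$ (Proposition \ref{Hess}, any $\beta<1$) with a Sobolev-type inequality for $|\nabla u_\e|^{-\theta}$ near its critical set, exactly as in Corollary 2.1 and its consequences in \cite{DS1}. The exponent bookkeeping — which $\beta<1$ and which Sobolev exponent to use so that the final power is $q(p-2)$ with $q<\tfrac{p-1}{p-2}$ — is the only nontrivial arithmetic.

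For the easy regime $1<p\le 2$ the weight $(\e+|\nabla u_\e|^2)^{-q(p-2)/2}=(\e+|\nabla u_\e|^2)^{q(2-p)/2}$ has a \emph{nonnegative} exponent, hence is bounded above by $(1+\|u_\e\|_{C^1}^2)^{q(2-p)/2}\le C$ uniformly in $\e$ by \eqref{per Fatou}, and then $\int_\Omega \frac{f^2}{(\e+|\nabla u_\e|^2)^{q(p-2)/2}}\le C\|f\|_{L^2(\Omega)}^2<\infty$ for every $q\ge1$; note $f\in C(\overline\Omega)\subset L^2(\Omega)$ covers this case. Finally, uniformity in $\e$ of the bound in the case $p>2$ is inherited from the $\e$-uniform version of Proposition \ref{Hess} stated there, and the passage to $\e=0$ is handled by Fatou's lemma using the $C^{1,\alpha'}$-convergence \eqref{per Fatou}, which in particular gives a.e. convergence of $(\e+|\nabla u_\e|^2)^{(p-2)/2}$ to $|\nabla u|^{p-2}$; this yields Proposition \ref{RHSregpart} as the special case $\e=0$.

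The main obstacle I anticipate is the $p>2$ estimate on the bad set: making rigorous the claim that a negative power $(\e+|\nabla u_\e|^2)^{-q(p-2)/2}$ is integrable with constant independent of $\e$, with the precise threshold $q<\tfrac{p-1}{p-2}$, requires reproducing the weighted-Sobolev argument of \cite{DS1} (testing the equation against powers of a truncation of $|\nabla u_\e|$, or equivalently combining the weighted Hessian bound with a Hardy/Sobolev inequality on the sublevel sets of the gradient) and tracking constants through it; everything else is either a direct consequence of $f\in L^\infty$, or of the $C^1$ a priori bound, or of Fatou.
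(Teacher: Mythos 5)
Your treatment of the easy regime $1<p\le 2$, of the uniformity in $\e$, and of the passage to $\e=0$ via Fatou's lemma and the $C^{1,\alpha'}$ convergence all match the paper. The gap is in the core estimate for $p>2$. You reduce the problem to showing that the pure negative power $\left(\e+|\nabla u_\e|^2\right)^{-q(p-2)/2}$ is integrable uniformly in $\e$, after discarding $f$ through $f^2\le\|f\|_\infty^2$. This is not the paper's route and, as written, it does not close: first, your own bookkeeping yields the threshold $q<\tfrac{2}{(p-2)^2}$ rather than the required $q<\tfrac{p-1}{p-2}$, and you defer the repair to an unspecified ``full strength of \cite{DS1}''; second, and more seriously, the sharp-range integrability of negative powers of $|\nabla u|$ is obtained in \cite{DS1} only under a positivity assumption on the source term, which is explicitly \emph{not} made here --- the proposition allows sign-changing $f$, and where $f$ vanishes the gradient may degenerate so badly that $|\nabla u|^{-\sigma}$ fails to be integrable. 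The whole point of the statement is that the factor $f^2$ in the numerator compensates for the degeneracy of the weight exactly where $f$ is small; your decomposition discards that compensation at the outset.

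The paper's actual mechanism is different and you would need to supply it: one tests the regularized equation \eqref{EQreg} against
\[
\psi=\frac{f\,\varphi_\delta^2}{\Big[\left(\e+|\nabla u_\e|^2\right)^{\frac{p-2}{2}}\Big]^{q}},
\]
where $\varphi_\delta$ is a cutoff adapted to an interior exhaustion of $\Omega$. The weak formulation then produces the desired integral $\int_\Omega f^2\varphi_\delta^2\left(\e+|\nabla u_\e|^2\right)^{-q(p-2)/2}$ directly on the left-hand side, while the right-hand side splits into a cutoff term (controlled by the uniform $C^{1,\alpha}$ bound together with $|\mathrm{supp}\,\nabla\varphi_\delta|\le C\delta$), a term involving $\nabla f$ (controlled since $f\in W^{1,r}$), and a Hessian term which, after Young's inequality, is absorbed partly into the left-hand side and partly estimated by Proposition \ref{Hess} with $\beta=(p-2)(q-1)$; the restriction $q<\tfrac{p-1}{p-2}$ is precisely the condition $\beta<1$. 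Your proposal correctly identifies Proposition \ref{Hess} as the engine, but never exhibits the test function that converts it into the weighted bound on $f^2$, and that choice of test function is the one genuinely nontrivial idea in the proof.
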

We do not assume here $q\geq 2.$
\begin{proof}
For some small $\delta>0$ define $\varphi_\delta=j_\delta*\chi_{K_\delta}$ where $\{j_\delta\}_{\delta>0}$ are standard radial mollifiers, $\{K_\delta\}_{\delta>0}$ is a monotone family of compact smooth subsets of $\Omega$ such that the characteristic functions $\chi_{K_\delta}\rightarrow \chi_\Omega$ as $\delta \rightarrow 0,$ in $L^1(\Omega),$ and
$\Omega_\delta=\{x \in \Omega \, : \, |x-y|\leq\delta, \, \textrm{for some $y\in K_\delta$}\}\subset \Omega.$  By construction $0\leq\varphi_\delta\leq 1,$ $\varphi_\delta\equiv 0$ outside $\Omega_\delta,$ $\lim_{\delta\rightarrow 0}\varphi_\delta=1$ for all $x\in\Omega,$ and zero outside. Moreover by scaling $|\nabla \varphi_\delta|< C/\delta.$ \\
We prove
\begin{equation}\label{deltacut}
\int_{\Omega}\frac{f^2\varphi^2_\delta}{\Big[\left(\e+|\nabla u_\e|^2\right)^{\frac{(p-2)}{2}}\Big]^q}\leq C
\end{equation}
for some uniform constant $C,$  the bound (\ref{drdrdgig}) will then follow by Fatou's lemma as $\delta\rightarrow 0.$ \\
The bounds are deduced for $\e\in (0,1)$ as the case $\e=0$ is a consequence again of Fatou's lemma, by using (\ref{per Fatou}) and the fact that the estimates are uniform in $\e.$
We test equation (\ref{EQreg}) against

\[
\psi\,:=\frac{f\varphi_\delta^2 }{\Big[\left(\e+|\nabla u_\e|^2\right)^{\frac{(p-2)}{2}}\Big]^q}.
\]

In fact using (\ref{EQreg}) we estimate

\begin{equation}\label{membrodestro}
\begin{split}
\int_{\Omega}\frac{f^2\varphi^2_\delta}{\Big[\left(\e+|\nabla u_\e|^2\right)^{\frac{(p-2)}{2}}\Big]^q}\leq& \Big| \int \left(\e+|\nabla u_\e|^2\right)^{\frac{(p-2)}{2}}\left(\nabla u_\e\,, \nabla \psi \right)\Big| \\
& \leq C_0 (A_\e+B_\e+C_\e),
\end{split}
\end{equation}
where we have set

\begin{equation}\nonumber
\begin{split}
A_\e &=\int_{\Omega}\left(\e+|\nabla u_\e|^2\right)^{\frac{(p-2)}{2}(1-q)}|\nabla u_\e| \cdot  \varphi_\delta \cdot | \nabla \varphi_\delta|\cdot |f|\\
B_\e & =\int_{\Omega}\left(\e+|\nabla u_\e|^2\right)^{\frac{(p-2)}{2}(1-q)}|\nabla u_\e|\cdot |\varphi_\delta|^2\cdot |\nabla f| \\
C_\e & =\int_{\Omega}\left(\e+|\nabla u_\e|^2\right)^{\frac{(p-2)}{2}(1-q)-1}|\nabla u_\e|^2\cdot \|D^2 u_\e\| \cdot |f|\cdot |\varphi_\delta|^2.
\end{split}
\end{equation}

We handle each term separately. \\

1) Estimate on $A_\e.$ The restriction of $q$ and the fact that $\|u_\e\|_{C^{1,\alpha}}\leq C_1$ uniformly with respect to $\e$ yields $\|\left(\e+|\nabla u_\e|^2\right)^{\frac{(p-2)}{2}(1-q)}|\nabla u_\e| \|_{\infty}\leq C_2.$
Notice that, in the construction of
$\{K_\delta\}_{\delta>0}$ and $\Omega_\delta$, we can assume with no loss of generality that   $|\textrm{supp}\nabla \varphi_\delta|<C_3\delta,$ for some constant $C_3>0$. Therefore  it follows the uniform bound
$$A_\e \leq C_4 \int_{\textrm{supp}\nabla \varphi_\delta}  | \nabla \varphi_\delta|\leq C_5 \frac{|\textrm{supp}\nabla \varphi_\delta|}{\delta}\leq C_6.$$
2) Estimate on $B_\e.$ Again we use that $\|\left(\e+|\nabla u_\e|^2\right)^{\frac{(p-2)}{2}(1-q)}|\nabla u_\e| \|_{\infty}$ is uniformly bounded together with the fact that $|\nabla f|\in L^1(\Omega)$, to conclude that $$B_\e\leq C_7.$$
3) Estimate on $C_\e.$ We obviously have $$C_\e\leq \int_{\Omega}\frac{|f|\varphi_\delta}{\Big[\left(\e+|\nabla u_\e|^2\right)^{\frac{(p-2)}{4}}\Big]^q}\cdot  \left(\e+|\nabla u_\e|^2\right)^{\frac{(p-2)}{2}-\frac{(p-2)}{4}q} \|D^2 u_\e\|. $$

By the elementary inequality on positive numbers $ab\leq \eta a^2+\frac{b^2}{\eta}$ it follows that
$$
C_\e \leq \eta \int_{\Omega}\frac{f^2\varphi^2_\delta}{\Big[\left(\e+|\nabla u_\e|^2\right)^{\frac{(p-2)}{2}}\Big]^q}+\frac{1}{\eta}\int_{\Omega}\left(\e+|\nabla u_\e|^2\right)^{p-2-\frac{(p-2)}{2}q} \|D^2 u_\e \|^2.
$$
In view of the restriction on $q$ we can apply Proposition \ref{Hess}, obtaining
$$
C_\e \leq \eta \int_{\Omega}\frac{f^2\varphi^2_\delta}{\Big[\left(\e+|\nabla u_\e|^2\right)^{\frac{(p-2)}{2}}\Big]^q}+\frac{1}{\eta}C_{8}
$$
Conclusion. By using the above estimates on $A_\e,B_\e,C_\e$, equation (\ref{membrodestro}) yields for some $\eta <\frac{1}{C_0}$

$$(1-\eta C_0 )\int_{\Omega}\frac{f^2\varphi^2_\delta}{\Big[\left(\e+|\nabla u_\e|^2\right)^{\frac{(p-2)}{2}}\Big]^q} \leq C(\eta)$$ where the constant $C(\eta)$
does not depend on $\e.$
 This concludes the proof.
\end{proof}

\section{Proof of Theorem \ref{regtheorem}}\label{Appl}
With the results of the preceding sections at hand we are now in position to prove Theorem \ref{regtheorem}.\begin{proof} [Proof of Theorem \ref{regtheorem}]
Let us start considering the case $p>2$.
Note that, as already recalled in Section \ref{preliminary results}, passing if necessary to a subsequence we have that
\begin{equation}\label{hdjshdjshjjsdfghdeyrty11}
u_\eps\overset{C^{1,\alpha'}(\overline{\Omega})}{\longrightarrow} u.\,
\end{equation}

Since  $2\leq q<\frac{p-1}{p-2},$ by Proposition \ref{RHSreg}, Proposition \ref{Hess}, by equations (\ref{classic}) and (\ref{calderon}) we deduce that
\begin{equation}\nonumber
\begin{split}
\left\|D^2 u_\e\right\|_{L^q(\Omega)}&\leq C(n,q)\left\|(p-2)\frac{\left(D^2u_\e\,\nabla u_\e\,,\,\nabla u_\e\right)}{\left(\e+|\nabla u_\e|^2\right)}
\,+\,\frac{f}{\left(\e+|\nabla u_\e|^2\right)^{\frac{p-2}{2}}}\right\|_{L^q(\Omega)}\\
&\leq C(n,q)(p-2)\left\|D^2 u_\e\right\|_{L^q(\Omega)}+C(n,q)\left\|\frac{f}{\left(\e+|\nabla u_\e|^2\right)^{\frac{p-2}{2}}}\right\|_{L^q(\Omega)}\\
&\leq  C(n,q)(p-2)\left\|D^2 u_\e\right\|_{L^q(\Omega)}+\tilde C(n,p,q,f)
\end{split}
\end{equation}
where we used that $2\leq q<\frac{p-1}{p-2}$. Here $\tilde C(n,p, q, f)= C\cdot C(n,q)$ where $C$ is given by Proposition \ref{RHSreg} and we also have used that $f$ is bounded by classical Morrey's embedding theorem. \\It follows
\begin{equation}\nonumber
\begin{split}
(1-C(n,q)(p-2))\left\|D^2 u_\e\right\|_{L^q(\Omega)}&\leq \tilde C(n,p,q, f)\,.
\end{split}
\end{equation}
Since $p-2<\frac{1}{C(n,q)}$ there holds $$\sup_{\e> 0} \left\|u_\e\right\|_{W^{2,q}(\Omega)}<\infty.$$
Classical Rellich's theorem implies now that passing if necessary to a subsequence
\[
u_\e\rightharpoonup w\in W^{2,q}(\Omega),\qquad \text{and }\, \textrm{almost everywhere in}\,\,\Omega\,.
\]
Therefore we infer that
\[
u\equiv w\in W^{2,q}(\Omega)\,.
\]

\noindent The proof in the case $1<p<2$ can be carried out exactly in the same way and observing that \eqref{drdrdgig}
 is obvious, being non-singular, therefore Proposition \ref{RHSreg} is not needed in this case. \\
Let now $\gamma \in (0,1)$ be fixed. There exists $q$ such that $$\gamma=1-\frac{n}{q},$$ and by the preceding part of the proof $u\in W^{2,q}(\Omega),$ for all $p$ in a suitable open interval containing
$p_0=2.$  It follows by classical Morrey's embedding that $$\partial_i u \in C^{0,\gamma}(\overline{\Omega}),\,\, i=1,...n.$$
This concludes the proof.

\end{proof}

\section{The autonomous equation}\label{Hessian reg sect}
\subsection{The approximated equation}
Let $\Omega$ be a bounded smooth domain of $\mathbb R^n,$ and $p>1.$ We consider now the autonomous equation
\begin{equation}\label{eq forte p-lap fu}
\begin{cases}
-\Delta_p\,u=f(u) & \text{ in }\Omega,\\
u=0 &  \text{ on }\partial\Omega\,
\end{cases}
\end{equation}
where $f:\R\rightarrow \R$ is a continuous function and $u\in C^{1,\alpha}(\overline{\Omega})$ is a weak solution.
We adapt here the approximation argument which has been used in the preceding section.
To this aim we consider the equation:
\begin{equation}\label{eq fu forte}
\begin{cases}
-\text{div}\left(\e+|\nabla u_\e|^2\right)^{\frac{p-2}{2}}\,\nabla u_\e=f(u)& \text{ in }\Omega, \quad \varepsilon\in (0,1),\\
u_\e=0 &  \text{ on }\partial\Omega.
\end{cases}
\end{equation}

Obviously $u_\e\in C^2(\overline{\Omega})$ defined by (\ref{eq fu forte}) is such that
\begin{equation}\label{equazione eps debole}
\int_\Omega \left(\varepsilon+|\nabla u_\varepsilon|^2
\right)^{\frac{p-2}{2}}\nabla u_\varepsilon\,\nabla
\psi\,dx\,=\,\int_\Omega f(u)\psi \,dx\,,
\end{equation}
for all $\psi\in C^\infty_c(\Omega)$.\\
Moreover
$u_\varepsilon$ is a classical solution of
\begin{equation}\label{classic2}
-\Delta u_\e\,=\, (p-2)\frac{\left(D^2u_\e\,\nabla u_\e\,,\,\nabla u_\e\right)}{\left(\e+|\nabla u_\e|^2\right)}
\,+\,\frac{f(u)}{\left(\e+|\nabla u_\e|^2\right)^{\frac{p-2}{2}}}
\end{equation}
in $\Omega.$\\

Although (\ref{eq forte p-lap fu}) may certainly have multiple solutions, uniqueness holds for equation (\ref{eq fu forte}), as well as for
\begin{equation}\label{eq forte p-lap fu 2}
\begin{cases}
-\Delta_p\,v=f(u) & \text{ in }\Omega\\
v=0 &  \text{ on }\partial\Omega\,.
\end{cases}
\end{equation}
By these observations, arguing exactly as in the preceding section again by the classical Arzela-Ascoli compactness theorem and up to a subsequence, there holds
\begin{equation*}
u_\e\overset{C^{1,\alpha'}(\overline{\Omega})}{\longrightarrow} u\,
\end{equation*}
for some $0<\alpha'<\alpha.$ \\

\subsection{Hessian estimates in the degenerate case $p>2$}
Throughout the present section $p>2;$ moreover the subscript $i$ indicates
the derivative with respect to $x_i:$ $u_i= \frac{\partial u}{\partial x_i}, \,\,i=1,...n.$
We also assume $f:\R\rightarrow \R$ to be a locally Lipschitz continuos function such that:
\begin{equation}\label{f propr1}
\exists \,\gamma>0,\,k>0\,:\, \lim_{t\to 0}\frac{f(t)}{|t|^{k-1}t}=\gamma
\end{equation}
and
\begin{equation}\label{f propr2}
f(0)=0,\quad f(t)\cdot t>0\quad\forall t\in\R\backslash\{0\}.
\end{equation}
In the following we will use (\ref{equazione eps debole}), and as in the preceding section, we will again make a suitable choice for a test function which after integrating by parts, with some abuse of language, linearises equation (\ref{equazione eps debole}).\\
For any $i=1,...,n$, plugging
$\varphi_i$ as test function into (\ref{equazione eps debole}) and
integrating by parts, we
obtain
\begin{eqnarray}\label{linearizzato}
  && \io\neps^{\frac{p-2}{2}}\la \n \uei,\n\vf\ra+
  (p-2)\io\neps^{\frac{p-4}{2}}\la\n \uei,\n \ue\ra \la\n \ue,\n\vf\ra= \\
\nonumber &=& \io f'(u)u_i\vf\,,
\end{eqnarray}
for all $\vf \in
C^1_c(\Omega)$ and by density for all $\vf \in
W^{1,1}_0(\Omega).$\\We have the following Hessian estimate which plays the same role of equation (\ref{kdjlgadklfgbaò}) given in Proposition \ref{Hesspart},
which is suitable in the present autonomous case, without assuming $\nabla u\neq\bf{0}$ on $\partial \Omega.$
\begin{proposition}[Weighted Hessian estimate]\label{stima hessiano}
Let $u_\eps$ be as above. For all $\beta\in [0,1)$, there holds:
\begin{equation}\label{stima hessiano 2}
\io\neps^\frac{p-2-\beta}{2} | D^2 u_\eps |^2 u^2  \leq C\,,
\end{equation}
where $C= C(\beta,p,n,f)>0$ is independent on $\eps$. The same estimate holds for $\varepsilon=0$ and $u_0:=u.$
\end{proposition}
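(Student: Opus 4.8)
The plan is to adapt the linearisation scheme of \cite{DS1,Sci1,Sci2}: test the linearised equation \eqref{linearizzato} with a suitably weighted multiple of $\uei$, sum over $i$, absorb the leftover terms, and finally let $\eps\to0$ by Fatou's lemma. Write $S:=\neps$; for $\eps\in(0,1)$ we have $\ue\in C^2(\overline\Omega)$ and $S\ge\eps>0$, so $S^{-\beta/2}\in C^1(\overline\Omega)$. For each $i=1,\dots,n$ I would plug
\[
\vf_i:=\uei\,S^{-\beta/2}\,u^2
\]
into \eqref{linearizzato}; since $u\in C^{1,\alpha}(\overline\Omega)$ vanishes on $\partial\Omega$, each $\vf_i\in W^{1,1}_0(\Omega)$, so this is admissible, and it is precisely the factor $u^2$ that plays here the role of the usual interior cut-off. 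Using $\n S=2D^2\ue\,\n\ue$, $\n(u^2)=2u\,\n u$, and the identities $\sum_i\uei\n\uei=D^2\ue\,\n\ue$, $\sum_i\la\n\uei,\n\ue\ra^2=|D^2\ue\,\n\ue|^2$, $\sum_i\uei\la\n\uei,\n\ue\ra=(D^2\ue\,\n\ue,\n\ue)=:L_\eps$, summation over $i$ turns \eqref{linearizzato} into
\[
G+(p-2)H-\beta H-(p-2)\beta K+R_1+R_2=R_0,
\]
where the three nonnegative principal quantities are
\[
G:=\io S^{\frac{p-2-\beta}{2}}|D^2\ue|^2u^2,\quad H:=\io S^{\frac{p-4-\beta}{2}}|D^2\ue\,\n\ue|^2u^2,\quad K:=\io S^{\frac{p-6-\beta}{2}}L_\eps^2\,u^2,
\]
$R_0:=\io f'(u)\la\n u,\n\ue\ra S^{-\beta/2}u^2$ is the right-hand side, and $R_1:=2\io S^{\frac{p-2-\beta}{2}}u\,\la D^2\ue\,\n\ue,\n u\ra$, $R_2:=2(p-2)\io S^{\frac{p-4-\beta}{2}}u\,L_\eps\,\la\n\ue,\n u\ra$ are the cross terms produced by differentiating $u^2$.

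The next step is to show that the definite part $G+(p-2)H-\beta H-(p-2)\beta K$ controls $G$ from below. One uses the elementary chain $0\le K\le H\le G$, which follows from $|L_\eps|\le|D^2\ue\,\n\ue|\,|\n\ue|\le S\,|D^2\ue|$, $|D^2\ue\,\n\ue|\le S^{1/2}|D^2\ue|$ and $|\n\ue|^2\le S$. Since the coefficient of $K$ is $\le 0$, replacing $K$ by $H$ gives $G+(p-2)H-\beta H-(p-2)\beta K\ge G+\big((p-2)(1-\beta)-\beta\big)H$; if the bracket is $\ge0$ one drops the $H$-term and is left with $\ge G$, whereas if it is $<0$ one replaces $H$ by $G$ and gets $\ge\big(1+(p-2)(1-\beta)-\beta\big)G=(p-1)(1-\beta)G$, with $(p-1)(1-\beta)\in(0,1)$ in that case. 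In either case
\[
G+(p-2)H-\beta H-(p-2)\beta K\ \ge\ c_0\,G,\qquad c_0:=\min\{1,(p-1)(1-\beta)\}>0,
\]
and this positivity uses only $p>2$ and $\beta<1$.

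It then remains to bound $R_0,R_1,R_2$ uniformly in $\eps$. Here one invokes the $\eps$-uniform bound $\|\ue\|_{C^{1,\alpha}(\overline\Omega)}\le C$ from \cite{Di,Li,T}, hence $|\n\ue|\le C$ and $S\le 1+C^2$, together with $\|u\|_{L^\infty}\le C$, $|\n u|\le C$ and $|f'(u)|\le C$ on the compact range of $u$. For $R_0$ one writes $|\la\n u,\n\ue\ra|S^{-\beta/2}\le C\,S^{(1-\beta)/2}\le C$ (this is where $\beta<1$ enters for the lower-order term), so $|R_0|\le C$. For $R_1,R_2$ one uses $|\n\ue|\le S^{1/2}$, $|D^2\ue\,\n\ue|\le S^{1/2}|D^2\ue|$ and $|L_\eps|\le S|D^2\ue|$ to see that both integrands are pointwise $\le C\,S^{(p-\beta)/4}\big(S^{\frac{p-2-\beta}{2}}|D^2\ue|^2u^2\big)^{1/2}$, with $S^{(p-\beta)/2}$ bounded; Young's inequality then yields $|R_1|+|R_2|\le\eta\,G+C_\eta$ for every $\eta>0$, with $C_\eta$ independent of $\eps$. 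Feeding this into the identity and taking $\eta<c_0$ gives $(c_0-\eta)G\le C$, i.e.\ \eqref{stima hessiano 2} for all $\eps\in(0,1)$ with a constant independent of $\eps$. Finally, to reach $\eps=0$: away from $Z_u:=\{\n u=0\}$ the equations \eqref{eq fu forte} are uniformly elliptic with locally $C^{0,\alpha}$ data, so $\ue\to u$ in $C^2_{\mathrm{loc}}(\Omega\setminus Z_u)$; Fatou gives $\int_{\Omega\setminus Z_u}|\n u|^{p-2-\beta}|D^2u|^2u^2\le\liminf_{\eps\to0}G\le C$, and since $D^2u=0$ a.e.\ on $Z_u$ by Stampacchia's theorem, the integral over $Z_u$ vanishes and \eqref{stima hessiano 2} holds also for $u_0:=u$.

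I expect the main obstacle to be the sign bookkeeping in the coercivity step: when $p$ is close to $2$ and $\beta$ close to $1$ the exponent $p-2-\beta$ is negative, the intermediate term $H$ enters with a negative coefficient, and it is not a priori clear that the definite part remains a positive multiple of $G$; the algebraic identity $1+(p-2)(1-\beta)-\beta=(p-1)(1-\beta)$ together with $0\le K\le H\le G$ is what closes the argument. The remaining, more routine, difficulty is to make every estimate uniform in $\eps$, which is exactly why one keeps the weight $S^{-\beta/2}=\neps^{-\beta/2}$ (rather than $|\n\ue|^{-\beta}$) and uses the $C^{1,\alpha}$ bounds to soak up all negative powers of $S$ against the factors $|\n\ue|$, $D^2\ue\,\n\ue$, $L_\eps$ and the bounded factor $u^2$.
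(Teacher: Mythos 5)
Your proof is correct, and it reaches \eqref{stima hessiano 2} by a route that genuinely differs from the paper's in its technical execution, even though it shares the same skeleton (linearised equation \eqref{linearizzato}, weighted-derivative test function with $u^2$ playing the cut-off role, Young absorption using the uniform $C^{1,\alpha}$ bounds, and the passage $\eps\to 0$ via $C^2_{\mathrm{loc}}(\Omega\setminus Z_u)$ convergence, Fatou and Stampacchia). The paper places the singular weight on the single component $\uei$: it tests with $T_\xi(\uei)u^2$, where $T_\xi(t)=G_\xi(t)|t|^{-\beta}$ is a truncation of $t|t|^{-\beta}$, exploits that the analogue of your $H$ carries the nonnegative factor $T'_\xi(\uei)$ and can simply be dropped for $p>2$, and then needs an extra limit $\xi\to 0$ plus the inequality $|\uei|\le\neps^{1/2}$ to convert the resulting bound on $\io\neps^{\frac{p-2}{2}}|\n\uei|^2|\uei|^{-\beta}u^2$ into \eqref{stima hessiano 2}. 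You instead put the weight on $\neps^{-\beta/2}$, which is smooth for $\eps>0$ and so dispenses with the truncation and the $\xi$-limit entirely; the price is that differentiating $\neps^{-\beta/2}$ produces the negatively-signed contributions $-\beta H-(p-2)\beta K$, and coercivity must be restored through the chain $0\le K\le H\le G$ together with the identity $1+(p-2)(1-\beta)-\beta=(p-1)(1-\beta)>0$ — a step the paper never needs. I checked the details: the identity after summation over $i$, the pointwise bounds $|D^2\ue\,\n\ue|\le\neps^{1/2}|D^2\ue|$ and $|(D^2\ue\,\n\ue,\n\ue)|\le\neps\,|D^2\ue|$ behind $K\le H\le G$, the admissibility of $\uei\neps^{-\beta/2}u^2$ in $W^{1,1}_0(\Omega)$ for $\eps>0$, and the exponent bookkeeping $S^{(p-1-\beta)/2}=S^{(p-\beta)/4}\cdot S^{(p-2-\beta)/4}$ in the Young step for $R_1,R_2$ all hold. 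Your version delivers exactly the stated estimate; the paper's intermediate bound with weight $|\uei|^{-\beta}$ is formally a bit stronger, but that surplus is not used elsewhere, so nothing is lost.
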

\proof Let $G_\xi:\R\to\R$ be defined as
$$
G_\xi(s)=\left\{
\begin{array}{ll}
s & \hbox{ if } |s| \geq 2 \xi, \\
2\left[s- \xi \frac{s}{|s|}\right]  & \hbox{ if }  \xi< |s|< 2\xi, \\
0 & \hbox{ if } |s|\leq \xi.
\end{array}
\right.
$$
Fix $\beta\in [0,1)$ and set
$$
T_\xi(t)=\frac{G_\xi(t)}{|t|^\beta}
$$
and
\begin{equation}\label{test}
\vf (x)=T_\xi(u_{\eps,i}(x))u^2(x)
\end{equation}
In the sequel we omit
the dependence on $x.$  Using $\vf$ as
test function in \eqref{linearizzato}, we have
\begin{eqnarray}\label{linearizzato con fi part}
  && \\
\nonumber && \io\neps^{\frac{p-2}{2}}|\n \uei|^2 T'_\xi(\uei)u^2+
   \io\neps^{\frac{p-2}{2}}\la\n \uei,\n u\ra T_\xi(\uei)2u\\
\nonumber &+&(p-2)\io \neps^{\frac{p-4}{2}}\la\n \uei,\n u_\e\ra^2
T'_\xi(\uei)u^2\\
\nonumber &+&
   (p-2)\io\neps^{\frac{p-4}{2}}\la\n \uei,\n u_\e\ra \la\n u_\e,\n u\ra T_\xi(\uei)2u\\
\nonumber &=&\io f'(u)u_i|T_\xi(\uei)|u^2.
\end{eqnarray}
In the sequel $c$ and $C$ will denote  positive constants,
possibly depending on $\|u_\eps\|_{W^{1,\infty}}$, whose value can
vary from line to line.

We set

\begin{eqnarray}
I_1 &=& \io\neps^{\frac{p-2}{2}}|\n \uei|^2 T'_\xi(\uei)u^2\\
\nonumber I_2 &=& (p-2)\io \neps^{\frac{p-4}{2}}\la\n \uei,\n u_\e\ra^2 T'_\xi(\uei)u^2\\
\nonumber I_3 &=& \io\neps^{\frac{p-2}{2}}\la\n \uei,\n u\ra T_\xi(\uei)2u\\
\nonumber I_4 &=& (p-2)\io\neps^{\frac{p-4}{2}}\la\n \uei,\n u_\e\ra \la \n u_\e,\n u\ra T_\xi(\uei)2u\\
\nonumber I_5 &=& \io f'(u)u_i T_\xi(\uei)u^2.
\end{eqnarray}

If $p\geq 2$, then $I_2$ is positive and hence we have
\begin{equation}\label{I1I2 a}
I_1+I_2\geq I_1\,.
\end{equation}

By \eqref{linearizzato con fi part} and \eqref{I1I2 a} we infer
\begin{equation}\label{diseq linearizzato 1}
I_1\leq |I_3|+|I_4|+|I_5|
\end{equation}

and hence, recalling that $u\in C^1(\overline\Omega)$, we get

\begin{equation}\label{stima 1}
\io\neps^{\frac{p-2}{2}}|\n \uei|^2 T'_\xi(\uei)u^2 \leq
c\io\neps^{\frac{p-2}{2}}|\n \uei||T_\xi(\uei)|u|+
\io|f'(u)||u_i||T_\xi(\uei)|u^2.
\end{equation}

We recall that there exists $M>0$ such that
\begin{equation}\label{equilimitatezza norme u}
\sup_{\eps}||u_\eps||_{W^{1,\infty}}\leq M\,.
\end{equation}

Therefore, recalling that $f$ is locally Lipschitz continuous, by
\eqref{equilimitatezza norme u} we get
\begin{equation}\label{stima pezzo con f}
\io|f'(u)||u_i||T_\xi(\uei)|u^2\leq C.
\end{equation}

Using \eqref{equilimitatezza norme u} and the elementary inequality
$ab\leq\theta a^2+\frac{1}{4\theta}b^2$  (for all $a,b\in\mathbb
R$ and $\theta>0$), we have
\begin{eqnarray}
&&\io\neps^{\frac{p-2}{2}}|\n \uei||T_\xi(\uei)|u|\\
\nonumber
  &=&\io
  \frac{\neps^{\frac{p-2}{4}}|\n \uei|G_\xi(\uei)^{\frac{1}{2}}|u|}{|\uei|^{\frac{\beta}{2}}|\uei|^{\frac{1}{2}}}
  \cdot
  \frac{\neps^{\frac{p-2}{4}}G_\xi(\uei)^{\frac{1}{2}}|\uei||\n u_\e|}{|\uei|^{\frac{\beta}{2}}|\uei|^{\frac{1}{2}}}\\
\nonumber
  &\leq&
  \theta\io
  \frac{\neps^{\frac{p-2}{2}} |\n \uei|^2 G_\xi(\uei)u^2 }{|\uei|^\beta \uei}
  +\frac{1}{4\theta}\io
  \neps^{\frac{p-\beta+2}{2}}G_\xi(\uei)\\
\nonumber
  &\leq&
  \theta\io
  \frac{\neps^{\frac{p-2}{2}} |\n \uei|^2 G_\xi(\uei)u^2 }{|\uei|^\beta \uei}
  +C.
\end{eqnarray}

Since
$$
T'_\xi(s)=\frac{1}{|s|^\beta}\left[G'_\xi(s)-\beta\frac{G_\xi(s)}{s}\right],
$$
after setting $\vartheta=c\theta$, by \eqref{stima 1} we get
\begin{equation}\label{stima hess 1}
\io \frac{\neps^\frac{p-2}{2}|\n \uei|^2}{|\uei|^\beta}
\left(G'_\xi(\uei)-(\beta+\vartheta)\frac{G_\xi(\uei)}{\uei}\right)u^2
\leq C.
\end{equation}

Choosing $\vartheta$ such that $\beta+\vartheta<1$, we have that
$G'_\xi(w^\eps_j)-(\beta+\vartheta)\frac{G_\xi(\uei)}{\uei}$ is
positive and by definition of $G_\xi$ it follows that

$$
G'_\xi(s)-(\beta+\vartheta)\frac{G_\xi(s)}{s}\to 1-(\beta+\theta)
$$
as $\xi\rightarrow 0$ and hence by Fatou's Lemma
\begin{equation}\label{stima hess 2}
\int_{\Omega\setminus\{\uei=0\}} \frac{\neps^\frac{p-2}{2}|\n
\uei|^2}{|\uei|^\beta}u^2\leq C\,.
\end{equation}

Moreover, since $|\uei|\leq |\n u_\eps|$, we have
\begin{eqnarray}
\nonumber \int_{\Omega\setminus\{\uei=0\}}
\neps^\frac{p-2-\beta}{2}|\n \uei|^2u^2 &=&
\int_{\Omega\setminus\{\uei=0\}}
\frac{\neps^\frac{p-2}{2}|\n \uei|^2 u^2}{\neps^\frac{\beta}{2}}\\
\nonumber&\leq& \int_{\Omega\setminus\{\uei=0\}}
\frac{\neps^\frac{p-2}{2}|\n \uei|^2 u^2}{|\uei|^\beta }
\end{eqnarray}
and hence by \eqref{stima hess 2} it follows
\begin{equation}\label{figata}
\int_{\Omega\setminus\{\uei=0\}} \neps^\frac{p-2-\beta}{2}|\n
\uei|^2 u^2\leq C
\end{equation}
where $C$ depends on $n,p,\beta,f$.
Since $\n \uei=0$ almost everywhere on
$\{\uei=0\}$ the statement is proved for all $\varepsilon\in (0,1).$\\
To prove the statement for $\varepsilon=0$ we argue as follows. Observe that since $\n u_\varepsilon \rightarrow \n u$ uniformy and by elliptic regularity theory, we have
$$u_\varepsilon \rightarrow u$$ in some $C^{2,\overline{\alpha}}(\overline{\omega})$ for all $\omega$ strictly contained in
$ \Omega\setminus Z_u$, where $Z_u=\{x\in \overline{\Omega} \, :\,\nabla u= \bf{0}\}$ is the critical set of $u$. Therefore, for some sequence, $\partial_{i,j}u_\varepsilon \rightarrow \partial_{i,j} u$ almost everywhere on $\Omega\setminus Z_u.$  Finally by Fatou's lemma we get from (\ref{figata})
\begin{equation}\label{stima hess 3}
 \int_{\Omega\setminus Z_u} |\n u|^{p-2-\beta}|\n u_i|^2
u^2 \leq C\,,
\end{equation}
and the estimate holds on the whole $\Omega,$ as by Stampacchia theorem $\partial_{i,j} u$ vanish almost everywhere on $Z_u.$
This concludes the proof.
\endproof
We now handle the summability of singularly weighted integrals involving $f.$ The following statements are obvious for $p\leq 2.$

\begin{proposition}[Singularly weighted estimate]\label{propo main estimate}
For $p> 2$ let $u_\e$ be given by \eqref{eq fu forte}.
Let $s$ and $p$ be such that $1\leq s<\frac{p-1}{p-2}$.
Then there exists a positive constant $C=C(p,n,f)$, independent on $\e$,
such that:
\begin{equation}\label{eq main estimate}
\io\frac{|u|^{2(k+1)}}{\neps^{\frac{p-2}{2}s}}
\leq
C\,.
\end{equation}
The same estimate holds for $\varepsilon=0$ and $u_0:=u.$\end{proposition}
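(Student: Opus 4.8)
The plan is to follow the scheme of the proof of Proposition \ref{RHSreg}, with the source $f$ there replaced by $f(u)$ and, crucially, the (here unavailable) boundary Hessian estimate replaced by the weighted estimate of Proposition \ref{stima hessiano}; the $u^2$ weight in that estimate is exactly what dictates the power $2(k+1)$ in the numerator. For $\e\in(0,1)$ I would test the weak formulation \eqref{equazione eps debole} against
$$
\psi\,=\,\frac{|u|^{k+1}u}{\neps^{\frac{p-2}{2}s}}\,.
$$
Since $u\in C^{1,\alpha}(\overline\Omega)$ vanishes on $\partial\Omega$ and $k>0$, one has $|u|^{k+1}u\in C^1(\overline\Omega)$ with $\n(|u|^{k+1}u)=(k+2)|u|^{k+1}\n u$, while $\neps\geq\e>0$ makes $\neps^{-\frac{p-2}{2}s}\in C^1(\overline\Omega)$; hence $\psi\in C^1(\overline\Omega)\cap W^{1,p}_0(\Omega)$ and the identity extends to it by density. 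The right-hand side becomes $\io f(u)|u|^{k+1}u\,\neps^{-\frac{p-2}{2}s}$, and here $H1),H2)$ enter: they force $f(t)t/|t|^{k+1}\to\gamma>0$ as $t\to 0$ and $f(t)t>0$ for $t\neq 0$, so $t\mapsto f(t)t/|t|^{k+1}$ is continuous and strictly positive on the compact range of $u$, whence $f(u)|u|^{k+1}u=|u|^{k+1}\big(f(u)u\big)\geq c_0|u|^{2(k+1)}$ with $c_0>0$ depending only on $f$ and $\|u\|_\infty$ (not on $\e$, since the datum in \eqref{eq fu forte} is the fixed function $f(u)$). The tested identity then yields
$$
c_0\io\frac{|u|^{2(k+1)}}{\neps^{\frac{p-2}{2}s}}\;\leq\;\Big|\io\neps^{\frac{p-2}{2}}\la\n u_\e,\n\psi\ra\Big|\,.
$$

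Next I would expand $\n\psi$, using $\n(\neps^{-\frac{p-2}{2}s})=-(p-2)s\,\neps^{-\frac{p-2}{2}s-1}D^2u_\e\,\n u_\e$; after multiplying by $\neps^{\frac{p-2}{2}}$, the right-hand side above is bounded by $(k+2)P_\e+(p-2)s\,Q_\e$ with
$$
P_\e=\io\frac{|u|^{k+1}|\la\n u_\e,\n u\ra|}{\neps^{\frac{p-2}{2}(s-1)}}\,,\qquad Q_\e=\io\frac{|u|^{k+2}|\la D^2u_\e\,\n u_\e,\n u_\e\ra|}{\neps^{\frac{p-2}{2}(s-1)+1}}\,.
$$
The bound on $P_\e$ is the routine one: from $|\la\n u_\e,\n u\ra|\leq M|\n u_\e|\leq M\,\neps^{1/2}$ and the elementary fact that $\frac12-\frac{p-2}{2}(s-1)>0$ precisely because $s<\frac{p-1}{p-2}$, the weight $|\n u_\e|\,\neps^{-\frac{p-2}{2}(s-1)}\leq\neps^{\frac12-\frac{p-2}{2}(s-1)}\leq(1+M^2)^{1/2}$ is uniformly bounded, so $P_\e\leq C$ uniformly in $\e$ since $u$ and $\Omega$ are bounded.

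The term $Q_\e$ is the heart of the argument. I would bound $|\la D^2u_\e\,\n u_\e,\n u_\e\ra|\leq c\,\|D^2u_\e\|\,|\n u_\e|^2\leq c\,\|D^2u_\e\|\,\neps$, reducing $Q_\e$ to $c\io|u|^{k+2}\|D^2u_\e\|\,\neps^{-\frac{p-2}{2}(s-1)}$, and then apply Young's inequality $ab\leq\eta a^2+\frac{1}{4\eta}b^2$ with $a=|u|^{k+1}\neps^{-\frac{p-2}{4}s}$ and $b=|u|\,\|D^2u_\e\|\,\neps^{\frac{p-2}{4}s-\frac{p-2}{2}(s-1)}$. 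Then $a^2$ reproduces exactly the target integrand $|u|^{2(k+1)}\neps^{-\frac{p-2}{2}s}$, while a direct computation gives $b^2=\neps^{\frac{p-2-\beta}{2}}\|D^2u_\e\|^2u^2$ with $\beta=(p-2)(s-1)$, and the hypothesis $1\leq s<\frac{p-1}{p-2}$ is precisely equivalent to $\beta\in[0,1)$, so $\io b^2\leq C$ by Proposition \ref{stima hessiano}. Choosing $\eta$ so small that $(p-2)s\,c\,\eta<c_0/2$ — legitimate because for each $\e>0$ the weight is non-singular and the target integral is finite — one absorbs the $\eta a^2$ term into the left-hand side and, together with the bound on $P_\e$, concludes $\io|u|^{2(k+1)}\neps^{-\frac{p-2}{2}s}\leq C$ uniformly in $\e\in(0,1)$. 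Finally, since $u_\e\to u$ in $C^{1,\alpha'}(\overline\Omega)$ and hence $\neps\to|\n u|^2$, Fatou's lemma passes the bound to $\e=0$, i.e. to $u_0:=u$; the case $p\leq 2$ is immediate since the weight is then non-singular.

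I expect the only genuine difficulty to be the exponent bookkeeping in the Young splitting of $Q_\e$: the powers of $\neps$ in $a$ and $b$ must be arranged so that $a^2$ is the target integrand and $b^2$ is exactly the weighted Hessian quantity controlled by Proposition \ref{stima hessiano}, and it is the algebraic identity $\beta=(p-2)(s-1)$ — together with the equivalence $\beta<1\Leftrightarrow s<\frac{p-1}{p-2}$ — that makes the scheme close and explains the restriction on $s$. The remaining points, namely the admissibility of $\psi$ as a test function and the $\e$-independence of $c_0$, are straightforward.
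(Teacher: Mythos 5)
Your proposal is correct and follows essentially the same route as the paper: the same test function $\psi=|u|^{k+1}u\,\neps^{-\frac{p-2}{2}s}$ in \eqref{equazione eps debole}, the same use of $H1),H2)$ to bound $f(u)\psi$ from below by $c_0|u|^{2(k+1)}\neps^{-\frac{p-2}{2}s}$, the same Young splitting of the Hessian term so that $b^2$ is exactly the quantity controlled by Proposition \ref{stima hessiano} with $\beta=(p-2)(s-1)\in[0,1)$, and the same Fatou passage to $\e=0$. The exponent bookkeeping you flag as the delicate point matches the paper's computation exactly.
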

\proof
We use
$$
\psi=\frac{|u|^{k+1}u}{\neps^{\frac{p-2}{2}s}}
$$
as test function in \eqref{equazione eps debole}.
After setting $L=||u||_{\infty}$,
\eqref{f propr1} and \eqref{f propr2} imply that there exists
$\lambda'>0$ such that
\begin{equation}\label{f propr3}
\frac{f(t)}{|t|^{k-1}t}\geq \lambda'\quad \textrm{for all} \,\, 0<|t|\leq L.
\end{equation}
Hence we have:
\begin{eqnarray}
\lambda' \nonumber\io\frac{u^{2(k+1)}}{\neps^{\frac{p-2}{2}s}} &\leq& \io f(u)\psi\\
\nonumber &\leq&
  c\io\frac{\neps^{\frac{p-2}{2}}|\n \ue|^2|D^2 \ue||u|^{k+2}}{\neps^{\frac{p-2}{2}s+1}}\\
\nonumber &\quad &\qquad
  +c\io\frac{\neps^{\frac{p-2}{2}}|\n \ue||\n u||u|^{k+1}}{\neps^{\frac{p-2}{2}s}}\\
\nonumber &\leq&
  c\io\neps^{\frac{p-2}{2}(1-s)-1}|\n \ue|^2|D^2 \ue||u|^{k+2}\\
 \nonumber &\quad &\qquad +c\io\neps^{\frac{p-1-(p-2)s}{2}}|u|^{k+1}\\
\nonumber &\leq&
  c\io\neps^{\frac{p-2}{2}(1-s)-1}|\n \ue|^2|D^2 \ue||u|^{k+2}\\
  \nonumber &\quad &\qquad+C\\
\nonumber &=&
  c\io\frac{|u|^{k+1}}{\neps^{\frac{p-2}{4}s}}
  \neps^{\frac{p-2}{4}s+\frac{p-2}{2}(1-s)-1}|\n \ue|^2|D^2 \ue||u|\\
\nonumber &\quad &\qquad+ C\\
\nonumber \textrm{(Young's inequality)}&\leq&
  c\theta\io\frac{|u|^{2(k+1)}}{\neps^{\frac{p-2}{2}s}}\\
  \nonumber &\quad &\qquad+\frac{c}{4\theta}\io\neps^{\frac{p-2}{2}s+(p-2)(1-s)-2}|\n\ue|^4|D^2 \ue|^2|u|^2.
\end{eqnarray}
After setting $\vartheta=c\theta$, we have:
\begin{equation}
(\lambda'-\vartheta)\io\frac{|u|^{2(k+1)}}{\neps^{\frac{p-2}{2}s}}
\leq
\io\neps^{\frac{p-2-(p-2)(s-1)}{2}}|D^2 \ue|^2|u|^2
\end{equation}
and, recalling that $s<\frac{p-1}{p-2}$, we can apply
Proposition \ref{stima hessiano} with $\beta=(p-2)(s-1).$
The statement for $\varepsilon=0$ follows by Fatou's lemma, and this conclude the proof.
\endproof

\begin{proposition}[Singularly weighted estimate for $f$]\label{stima destra}
Let $p> 2$ let $u_\e$ be given by \eqref{eq fu forte},
and let $r\geq 1, k>0$ and $p$ be such that
\begin{equation}\label{max}\textrm{max}\Big(\frac{2(k+1)}{k}, r\Big)<\frac{p-1}{p-2}.
\end{equation}Then there exists a positive constant $C=C(p,n,f)$, independent on $\e$,
such that:
\begin{equation}
\io\left(
\frac{|f(u)|}{\neps^{\frac{p-2}{2}}}
\right)^r
\leq C\,.
\end{equation}
The same estimate holds for $\varepsilon=0$ and $u_0:=u.$
\end{proposition}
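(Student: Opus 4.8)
The plan is to reduce the claim to the singularly weighted estimate of Proposition~\ref{propo main estimate}, applied to a suitable exponent $s$, after replacing $|f(u)|$ by a pure power of $|u|$. As a preliminary step I would record that, since $f$ is locally Lipschitz and satisfies \eqref{f propr1}, there exist $\delta\in(0,1]$ and $C_1>0$ with $|f(t)|\le C_1|t|^{k}$ whenever $|t|\le\delta$, while on the complementary set $\{|u|>\delta\}$ one trivially has $|f(u)|\le\|f\|_{L^\infty([-L,L])}$, with $L:=\|u\|_\infty$. Setting $m:=\min(kr,2(k+1))$, and using $|u|\le\delta\le1$ on $\{|u|\le\delta\}$ and $|u|>\delta$ on its complement, these two bounds merge into a single pointwise inequality
\begin{equation}\nonumber
|f(u)|^{r}\ \le\ C\,|u|^{m}\qquad\text{on }\Omega,
\end{equation}
with $C=C(p,n,f,r)$. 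It therefore suffices to prove $\io |u|^{m}\,\neps^{-\frac{p-2}{2}r}\le C$, uniformly in $\e$.

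If $r\ge\frac{2(k+1)}{k}$ then $m=2(k+1)$, and the required bound is exactly Proposition~\ref{propo main estimate} with $s=r$, which is admissible since $1\le r<\frac{p-1}{p-2}$ by \eqref{max}. If instead $r<\frac{2(k+1)}{k}$, so that $m=kr<2(k+1)$, I would interpolate. Put $s:=\frac{2(k+1)}{k}$, so that $s\ge 2$ and, by \eqref{max}, $s<\frac{p-1}{p-2}$; let $\theta:=\frac{r}{s}\in(0,1)$. Since $kr=2(k+1)\theta$ and $\frac{p-2}{2}r=\frac{p-2}{2}s\,\theta$, one has the pointwise identity
\begin{equation}\nonumber
\frac{|u|^{kr}}{\neps^{\frac{p-2}{2}r}}=\left(\frac{|u|^{2(k+1)}}{\neps^{\frac{p-2}{2}s}}\right)^{\theta},
\end{equation}
so that, integrating over the bounded domain $\Omega$ and applying H\"older's (Jensen's) inequality,
\begin{equation}\nonumber
\io\frac{|u|^{kr}}{\neps^{\frac{p-2}{2}r}}\ \le\ |\Omega|^{1-\theta}\left(\io\frac{|u|^{2(k+1)}}{\neps^{\frac{p-2}{2}s}}\right)^{\theta}\ \le\ C,
\end{equation}
the last step being Proposition~\ref{propo main estimate} with exponent $s$. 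In both cases the constants do not depend on $\e$, and the case $\varepsilon=0$, $u_0:=u$, then follows by Fatou's lemma as in the previous propositions (using $\nabla u_\e\to\nabla u$ uniformly, hence $\neps\to|\nabla u|^{2}$ pointwise, while $f(u)$ is fixed).

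I expect the only genuinely delicate point to be the interpolation in the regime $r<\frac{2(k+1)}{k}$: one has to notice that the exponent $s=\frac{2(k+1)}{k}$ --- precisely the first entry of the maximum in \eqref{max} --- is the unique choice that both forces the power of $|u|$ to equal $2(k+1)$ and keeps the weight of the form $\neps^{\frac{p-2}{2}s}$, so that Proposition~\ref{propo main estimate} is applicable. With this choice the admissibility requirement $s<\frac{p-1}{p-2}$ is exactly the first inequality in \eqref{max}, while $r<\frac{p-1}{p-2}$, needed in the complementary regime, is the second; thus the whole hypothesis \eqref{max} is consumed and nothing more. Everything else reduces to elementary manipulations with powers of the bounded function $u$.
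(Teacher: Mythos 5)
Your proof is correct and follows essentially the same route as the paper's: both reduce the claim to Proposition \ref{propo main estimate} via the bound $|f(t)|\lesssim |t|^{k}$, split into the cases $r\ge \frac{2(k+1)}{k}$ (where $s=r$ is used directly) and $r<\frac{2(k+1)}{k}$ (where one interpolates via H\"older with $s=\frac{2(k+1)}{k}$), and conclude the $\varepsilon=0$ case by Fatou's lemma. The only cosmetic difference is that you replace $|f(u)|$ by a power of $|u|$ before applying H\"older, whereas the paper applies H\"older to $\bigl(|f(u)|\,(\varepsilon+|\nabla u_\varepsilon|^2)^{-\frac{p-2}{2}}\bigr)^r$ first and then invokes the previous case.
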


\proof

After setting $L=||u||_{\infty}$,
\eqref{f propr1} and \eqref{f propr2} imply that there exists
$\lambda'>0$ such that:
\begin{equation}\label{f propr4}
|f(t)|\leq \lambda'|t|^{k}\quad \textrm{for all} \,\, |t|\leq L.
\end{equation}

Since (\ref{max}) holds we have two cases: $r\geq \frac{2(k+1)}{k}$ and $r<\frac{2(k+1)}{k} .$ \\
If $r\geq \frac{2(k+1)}{k}$ by \eqref{f propr4} we have:
\begin{equation}\label{stima Lr}
\io\left(
\frac{|f(u)|}{\neps^{\frac{p-2}{2}}}
\right)^r
\leq
c\io\frac{|u|^{kr}}{\neps^{\frac{p-2}{2}r}}
\leq
c'
\io\frac{|u|^{2(k+1)}}{\neps^{\frac{p-2}{2}r}}
\end{equation}
and the conclusion follows by \eqref{eq main estimate} taking
$s=r.$
If $r<\frac{2(k+1)}{k} ,$ we set $s=\frac{2(k+1)}{k}$ and by H\"older inequality we estimate
\begin{equation}\label{stima Lrr}
\io\left(
\frac{|f(u)|}{\neps^{\frac{p-2}{2}}}
\right)^r
\leq
|\Omega|^{1-\frac{r}{s}}\left(\io\left(
\frac{|f(u)|}{\neps^{\frac{p-2}{2}}}
\right)^s\right)^{\frac{r}{s}}
\end{equation}
and the right hand side is uniformly bounded because of the preceding case. \\
Finally the case $\varepsilon=0$ follows as in the preceding proof by Fatou's lemma, and this concludes the proof.
\endproof

\section{Proof of Theorem \ref{regtheorem2}}\label{Appl2}
In the present section we prove Theorem \ref{regtheorem2} following the same scheme of the proof of Theorem \ref{regtheorem}.
\begin{proof} [Proof of Theorem \ref{regtheorem2}]
Again we consider first the case $p>2.$
Note that, as already observed earlier in the preceding section,
\begin{equation}\label{hdjshdjshjjsdfghdeyrty11}
u_\eps\overset{C^{1,\alpha'}(\overline{\Omega})}{\longrightarrow} u.\,
\end{equation}
By Proposition \ref{stima hessiano}, Proposition \ref{stima destra} with $r=q$, (\ref{classic2}) and (\ref{calderon}) we deduce that
\begin{equation}\nonumber
\begin{split}
\left\|D^2 u_\e\right\|_{L^q(\Omega)}&\leq C(n,q)\left\|(p-2)\frac{\left(D^2u_\e\,\nabla u_\e\,,\,\nabla u_\e\right)}{\left(\e+|\nabla u_\e|^2\right)}
\,+\,\frac{f}{\left(\e+|\nabla u_\e|^2\right)^{\frac{p-2}{2}}}\right\|_{L^q(\Omega)}\\
&\leq C(n,q)(p-2)\left\|D^2 u_\e\right\|_{L^q(\Omega)}+C(n,q)\left\|\frac{f}{\left(\e+|\nabla u_\e|^2\right)^{\frac{p-2}{2}}}\right\|_{L^q(\Omega)}\\
&\leq  C(n,q)(p-2)\left\|D^2 u_\e\right\|_{L^q(\Omega)}+\tilde C.
\end{split}
\end{equation}
Here $\tilde C= \tilde C(p, q, n, f)=C \cdot C(n,q)$ where $C$ is given by Proposition \ref{stima destra}. It follows
\begin{equation}\nonumber
\begin{split}
(1-C(n,q)(p-2))\left\|D^2 u_\e\right\|_{L^q(\Omega)}&\leq \tilde C\,.
\end{split}
\end{equation}
Since $p-2<\frac{1}{C(n,q)}$ we have that $$\sup_{\e> 0} \left\|u_\e\right\|_{W^{2,q}(\Omega)}<\infty.$$
Classical Rellich's theorem implies now that up to subsequences
\[
u_\e\rightharpoonup w\in W^{2,q}(\Omega),\qquad \text{and }\, \textrm{almost everywhere in}\,\,\Omega\,.
\]
Therefore we have that
\[
u\equiv w\in W^{2,q}(\Omega)\,.
\]

\noindent The proof in the case $1<p<2$ can be carried out exactly in the same way observing that Proposition \ref{stima destra}
is not needed, as weighted integrals are non-singular in this case. \\
The statement on the $C^{1,\gamma}$ regularity follows by the same argument used for Theorem \ref{regtheorem}. And this concludes the proof.

\end{proof}

\section*{Acknowledgements}
C.M. would like to thank the Department of Mathematics and Computer Science - Universit\`a
 della Calabria (Italy) for the warm hospitality when the present paper has been written.

\bigskip

\end{document}